\newtheorem{theorem}{Theorem}[section]
\newtheorem{proposition}[theorem]{Proposition}
\newtheorem{lemma}[theorem]{Lemma}
\newtheorem{remark}[theorem]{Remark}
\newtheorem{bigthm}{Theorem}
\def\a{\alpha}
\def\b{\beta}
\title[Jacobi heat kernel]
	{New sharp bounds for the Jacobi heat kernel via \\ an extension of the Dijksma-Koornwinder formula}
\author[A.\ Nowak]{Adam Nowak \orcidlink{0000-0003-0814-6795}}
\address[Adam Nowak]{Institute of Mathematics,
Polish Academy of Sciences,
\'Sniadeckich 8,
00-656 Warszawa, Poland}
\email{anowak@impan.pl}
\author[P. Sj\"ogren]{Peter Sj\"ogren \orcidlink{0000-0001-7862-8584}}
\address[Peter Sj\"ogren]{
Mathematical Sciences, University of Gothenburg
and
Mathematical Sciences, Chalmers University of Technology,
SE-412 96 G\"oteborg, Sweden
}
\email{peters@chalmers.se}
\author[T.Z.\ Szarek]{Tomasz Z.\ Szarek \orcidlink{0000-0003-0821-5607}}
\address[Tomasz Z. Szarek]{
Department of Mathematics, University of Georgia,
Athens, GA 30602, USA
and
Instytut Matematyczny,
Uniwersytet Wroc{\l}awski,
Plac Grunwaldzki 2,
50-384 Wroc{\l}aw,
Poland}
\email{tzs10705@uga.edu}
\begin{document}


\begin{abstract}
We obtain sharp estimates for the Jacobi heat kernel in a range of parameters where
the result has not been established before. This extends and completes an earlier result due to the authors.
The proof is based on a generalization of the Dijksma-Koornwinder product formula for Jacobi polynomials.
\end{abstract}

\maketitle
\thispagestyle{empty}

\footnotetext{
\emph{2020 Mathematics Subject Classification:}
primary 35K08; secondary 33C45, 42C05.\\
\emph{Key words and phrases:}
Jacobi heat kernel, sharp estimate, product formula, Jacobi polynomial, Jacobi expansions.}

\section{Introduction}

Let $\a,\b > -1$. The Jacobi heat kernel is defined by
\begin{equation} \label{ker_Jac}
G_t^{\a,\b}(x,y) := \sum_{n=0}^{\infty} e^{-t n(n+\a+\b+1)} \frac{P_n^{\a,\b}(x) P_n^{\a,\b}(y)}{h_n^{\a,\b}},
	\qquad x,y \in [-1,1], \quad t > 0,
\end{equation}
where $P_n^{\a,\b}$ are the classical Jacobi polynomials and $h_n^{\a,\b}$ are suitable normalizing
constants. This is the kernel of the Jacobi semigroup generated by the Jacobi operator; see Section \ref{sec:pre}
for precise definitions. The oscillatory series displayed above cannot be computed.

The kernel $G_t^{\a,\b}(x,y)$ is an important object in analysis and beyond,
in particular due to its deep connections with, in a sense, more elementary
heat kernels in numerous classical frameworks. These include, among others, the contexts of Euclidean spheres (cf.\ \cite{NSS1}),
compact rank-one symmetric spaces (cf.\ \cite{NSS2}), Fourier-Bessel expansions (cf.\ \cite{NoRo2})
and algebraic contexts related to multi-dimensional
balls, simplices, cones, conical surfaces and other domains, see e.g.\ \cite{NSS2,HK,DuXu,DaiXu} and references therein.
Thus advancing knowledge about $G_t^{\a,\b}(x,y)$ is widely motivated.

In this paper we complete our study \cite{NSS2} of the behavior of the Jacobi heat kernel by proving genuinely sharp two-sided
global estimates in the whole range of the parameters $\a$ and $\b$ indicated above.
In order to state the main result, we define for $t > 0$ and $\theta,\varphi \in [0,\pi]$ the quantity
$$
Z_t^{\a,\b}(\theta,\varphi) = \big[ t + \theta \varphi \big]^{-\a-1/2} \,
	\big[ t + (\pi-\theta)(\pi-\varphi)\big]^{-\b-1/2}\; \frac{1}{\sqrt{t}} \exp\bigg({- \frac{(\theta-\varphi)^2}{4t}}\bigg).
$$
We will prove the following theorem, which concludes
the long-standing open problem of finding sharp bounds for the Jacobi heat kernel.
\begin{bigthm} \label{thm:main}
Let $\a,\b > -1$ and $T > 0$ be fixed. There exists a constant $C=C(\a,\b,T) > 1$ such that
$$
C^{-1} Z_t^{\a,\b}(\theta,\varphi) \le G_t^{\a,\b}(\cos\theta,\cos\varphi) \le C Z_t^{\a,\b}(\theta,\varphi)
$$
for $\theta,\varphi \in [0,\pi]$ and $0 < t \le T$.
\end{bigthm}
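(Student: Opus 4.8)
The plan is to derive Theorem~\ref{thm:main} from two ingredients: the generalized Dijksma--Koornwinder product formula established above, and sharp two-sided bounds — pointwise and for the first derivative — for the auxiliary kernel $\Xi \mapsto G_t^{\a,\b}(\cos\Xi,1)$, in which one spatial variable is frozen at the endpoint $1$. I would begin with elementary reductions. Since $G_t^{\a,\b}(\cos\theta,\cos\varphi) = G_t^{\b,\a}(\cos(\pi-\theta),\cos(\pi-\varphi))$ and $Z_t^{\a,\b}(\theta,\varphi) = Z_t^{\b,\a}(\pi-\theta,\pi-\varphi)$, one may assume $\a \ge \b$; the subrange $\b \ge -1/2$ was settled in \cite{NSS2}, so the new content is $-1 < \b < -1/2$. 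Furthermore, for $t$ bounded away from $0$ both sides of the asserted estimate are comparable to $1$ (by continuity and strict positivity of $G_t^{\a,\b}$ on the relevant compact set), so one may also take $t$ small.

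The main step is to insert the product formula into \eqref{ker_Jac}. With $\mathcal{R}_n^{\a,\b} := P_n^{\a,\b}/P_n^{\a,\b}(1)$, the generalized formula expresses $\mathcal{R}_n^{\a,\b}(\cos\theta)\,\mathcal{R}_n^{\a,\b}(\cos\varphi)$ as the integral of $\mathcal{R}_n^{\a,\b}(\cos\Xi)$ against a kernel $d\sigma_{\theta,\varphi}$ over a bounded parameter set — a kernel which, for $\b<-1/2$, is a \emph{signed} measure whose singular part is a first derivative of an explicit Jacobi-type density. Summing the series termwise and invoking the identity $\sum_n e^{-tn(n+\a+\b+1)}\,(P_n^{\a,\b}(1))^2\,(h_n^{\a,\b})^{-1}\,\mathcal{R}_n^{\a,\b}(\cos\Xi) = G_t^{\a,\b}(\cos\Xi,1)$, one obtains
$$
G_t^{\a,\b}(\cos\theta,\cos\varphi) \;=\; \int G_t^{\a,\b}\big(\cos\Xi_{\theta,\varphi}(s),1\big)\, d\sigma_{\theta,\varphi}(s),
$$
where the ``addition angle'' $\Xi_{\theta,\varphi}(s)$ ranges over an interval with left endpoint $|\theta-\varphi|$. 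For the integrand I would use the sharp estimate
$$
G_t^{\a,\b}(\cos\Xi,1) \;\asymp\; t^{-\a-1}\,\big[\,t + \pi(\pi-\Xi)\,\big]^{-\b-1/2}\, \exp\!\Big(\!-\tfrac{\Xi^2}{4t}\Big)
$$
together with the matching bound for $\partial_\Xi G_t^{\a,\b}(\cos\Xi,1)$; these are either a special case of \cite{NSS2} for the relevant parameters, or can be proved directly and more easily, the corresponding kernel being essentially of Hankel type. The Gaussian factor above concentrates the integral near $\Xi=|\theta-\varphi|$, which after integration accounts for the Gaussian factor of $Z_t^{\a,\b}$.

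It remains to estimate the displayed integral against $Z_t^{\a,\b}(\theta,\varphi)$ from both sides, and this is where the genuine difficulty lies. Because $d\sigma_{\theta,\varphi}$ is signed, with a density that is not absolutely integrable when $\b<-1/2$, neither bound survives a brute-force argument and one must exploit cancellation. I would do this by reading the integral through the integration by parts that defines the finite part of the density $d\sigma_{\theta,\varphi}$, transferring the derivative onto the smooth map $\Xi\mapsto G_t^{\a,\b}(\cos\Xi,1)$ — for which both the pointwise and the derivative bounds above are available — and carefully collecting the boundary terms so generated. After this reduction, the upper and lower estimates both reduce to a long but routine analysis, split into regions according to the relative sizes of $\theta$, $\varphi$, $|\theta-\varphi|$ and $t$, and according to the positions of $\theta$ and $\varphi$ relative to the endpoints $0$ and $\pi$ (which govern the two boundary factors of $Z_t^{\a,\b}$). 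In each region one isolates a positive main contribution of $d\sigma_{\theta,\varphi}$, coming from $\Xi$ close to $|\theta-\varphi|$, which reproduces $Z_t^{\a,\b}(\theta,\varphi)$ up to constants, and shows that the remaining contributions — including the integration-by-parts boundary terms — are of strictly smaller order. The principal obstacles are thus (i) carrying out this region-by-region bookkeeping near both endpoints simultaneously, and (ii) verifying that the cancellation in the signed kernel is precisely strong enough to keep the upper bound down to $Z_t^{\a,\b}(\theta,\varphi)$ while still leaving a positive main term for the lower bound.
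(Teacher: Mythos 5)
Your overall strategy---feed a product formula into the series \eqref{ker_Jac}, reduce to a kernel with one variable frozen at the endpoint, and tame the signed measure arising for $\b<-1/2$ by transferring derivatives onto the smooth reduced kernel---is the same broad strategy the paper follows. But as written the proposal has two genuine gaps.

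First, the product formula you invoke has the wrong shape. The Dijksma--Koornwinder formula and its extension (Theorem \ref{thm:DK_ext}) do \emph{not} express $\mathcal{R}_n^{\a,\b}(\cos\theta)\mathcal{R}_n^{\a,\b}(\cos\varphi)$ as an integral of $\mathcal{R}_n^{\a,\b}(\cos\Xi)$ against a kernel: they express the product in terms of \emph{ultraspherical} polynomials of degree $2n$ (and, after the extension, $2n-1$, $2n-2$) with shifted parameters $\a+\b+1/2$, $\a+\b+3/2$, $\a+\b+5/2$. Consequently the reduction formula (Theorem \ref{thm:red_ext}) produces $G_{t/4}^{\a+\b+1/2}(\cdot,1)$ and its shifted relatives, not $G_t^{\a,\b}(\cdot,1)$; and when $\a+\b\le -3/2$ the reduced parameter drops to $\le -1$, outside the Jacobi range, forcing the introduction of the auxiliary function $H_t^{\lambda}$ (which is not a heat kernel) and a separate estimate for it (Lemma \ref{Htl}). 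A Gasper-type formula of the shape you describe, with a usable signed kernel for $\b<-1/2$, is not what is established here, and your summation identity $\sum_n e^{-tn(n+\a+\b+1)}\cdots\,\mathcal{R}_n^{\a,\b}(\cos\Xi)=G_t^{\a,\b}(\cos\Xi,1)$ does not apply to the series the actual product formula yields (where the time must be rescaled to $t/4$ and exponential prefactors appear).

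Second, the two-sided bound you take as input for the frozen-endpoint kernel over the \emph{full} range $\Xi\in[0,\pi]$, namely $G_t^{\a,\b}(\cos\Xi,1)\simeq t^{-\a-1}[t+\pi(\pi-\Xi)]^{-\b-1/2}\exp(-\Xi^2/(4t))$, is exactly the case $\varphi=0$ of the theorem being proved. For $\b<-1/2$ it is not contained in \cite{NSS2}, and the behaviour near $\Xi=\pi$ (where the factor $[t+\pi(\pi-\Xi)]^{-\b-1/2}$ \emph{vanishes}) is precisely the hard new regime, so declaring it ``provable directly and more easily'' begs the question. The paper's route avoids this entirely: after symmetrization the reduced argument satisfies $\Xi\le\pi/2$, so only the half-range ultraspherical estimate of Lemma \ref{lem:jac_low} is needed --- and even that requires a nontrivial argument (showing $G_t^{\lambda}(0,1)\to 0$ via the diffusion-semigroup maximal theorem, then integrating derivative bounds obtained from Lemma \ref{lem:diff}). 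Your integration-by-parts treatment of the signed measure and the region-by-region bookkeeping are in the right spirit and correspond to Steps 1--5 of the paper's proof, but without repairing these two inputs the argument does not close.
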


In the restricted range of parameters $\a,\b \ge -1/2$, Theorem \ref{thm:main} was proved by the authors in \cite{NSS1,NSS2},
see \cite[Theorem 3.1]{NSS2}. We remark that only qualitatively sharp, short-time bounds for $G_t^{\a,\b}(x,y)$
were earlier obtained in \cite{CKP} and independently by other methods in \cite{NoSj};
in the latter paper under the restriction $\a,\b \ge -1/2$.
The difference between genuinely sharp and qualitatively sharp bounds is that in the latter case the constants appearing
in the exponential factors in the lower and upper estimates (which are $1/4$ in our Theorem \ref{thm:main})
differ from each other and from the optimal one.
This exponential gap swallows various polynomial factors, which in case of genuinely sharp bounds must be determined precisely.

On the other hand, for any $\a,\b > -1$ showing the long-time sharp estimates
\begin{equation} \label{1.XX}
C^{-1} \le G_t^{\a,\b}(x,y) \le C, \qquad x,y \in [-1,1], \quad t \ge T,
\end{equation}
with $T > 0$ fixed and some $C=C(\a,\b,T)>1$ independent of $x,y,t$, is much more straightforward, see \cite{CKP,NoSj}.

The new contribution of Theorem \ref{thm:main} pertains essentially (see \cite[Remark 3.2]{NSS2})
to the range of parameters when $\a < -1/2$ or $\b < -1/2$, in particular to the tricky case when also $\a+\b \le -3/2$.
These cases are more difficult to handle than the previously solved case $\a,\b \ge -1/2$.
In fact, some crucial tools and techniques elaborated and used earlier in \cite{NoSj,NSS2} break down for the parameters below
$-1/2$ and hence the proof of Theorem \ref{thm:main} requires some new ideas and tools. One of them is an extended variant
of a \emph{product formula} for Jacobi polynomials due to Dijksma and Koornwinder \cite{DK}. This extension, obtained
by means of analytic continuation,
is contained in Theorem \ref{thm:DK_ext}, see also Remark \ref{rem:complex}; we believe that this result is of independent interest.
For the statement of the original Dijksma-Koornwinder formula and further details, see Section \ref{sec:DK}.
The extended product formula leads to an extension of a \emph{reduction formula} for the Jacobi heat kernel
from \cite[Theorem 3.1]{NoSj}; see Theorem \ref{thm:red_ext}, which is the key to proving the desired bounds.

The paper is organized as follows.
In Section \ref{sec:pre} we introduce basic notation and gather various facts and results needed in the sequel.
Section \ref{sec:DK} is devoted to the extension of the Dijksma-Koornwinder product formula.
In Section \ref{sec:red} we extend the reduction formula for the Jacobi heat kernel.
Section \ref{sec:bnd} contains some preparations for the proof of Theorem \ref{thm:main}, given in Section \ref{sec:proof}.

\section{Preliminaries} \label{sec:pre}

Throughout the paper we use a standard notation.
We will frequently write $X \lesssim Y$ to indicate that $X \le C Y$ with a positive constant $C$
independent of significant quantities. We shall write $X \simeq Y$ when simultaneously $X \lesssim Y$ and $Y \lesssim X$.

\subsection*{Varia}
For various facts below, see e.g.\ \cite{Sz}. We use the following notation.
\begin{align*}
P_n^{\a,\b} & \qquad \textrm{Jacobi polynomials} \\
C_n^{\lambda}\;\; & \qquad \textrm{Gegenbauer/ultraspherical polynomials} \\
P_n^{\lambda}\;\; & \qquad \textrm{Jacobi polynomial} \;\; P_n^{\lambda,\lambda}.
\end{align*}
The Gegenbauer and Jacobi polynomials are linked by
$$
C_n^{\lambda}(x) = \frac{\Gamma(\lambda+1/2)}{\Gamma(2\lambda)}\,
\frac{\Gamma(n+2\lambda)}{\Gamma(n+\lambda+1/2)} \,P_n^{\lambda-1/2}(x),
	\qquad \lambda > -1/2, \quad \lambda \neq 0.
$$
We will use a similar shortened notation for the Jacobi heat kernel, $G_t^{\lambda}(x,y):=G_t^{\lambda,\lambda}(x,y)$,
and for other objects defined below, e.g.\ $h_n^{\lambda} := h_n^{\lambda,\lambda}$, $J^{\lambda} := J^{\lambda, \lambda}$.

For $\a,\b > -1$ the Jacobi orthogonality measure on the interval $[-1,1]$ is
$$
d\varrho_{\a,\b}(x) = (1-x)^{\a}(1+x)^{\b}\, dx.
$$
For complex $\a$ and $\b$ such that $\Re \a, \Re \b > -1$, we let
$$
h_n^{\a,\b}  =
\frac{2^{\a+\b+1} \Gamma(n+\a+1)\Gamma(n+\b+1)}{(2n+\a+\b+1)\Gamma(n+\a+\b+1)\Gamma(n+1)}, \qquad n \ge 0,
$$
where in the case $n=0$ the product $(2n+\a+\b+1)\Gamma(n+\a+\b+1)$ is replaced by $\Gamma(\a+\b+2)$.
For real $\a,\b > -1$, one has $h_n^{\a,\b} = \|P_n^{\a,\b}\|^2_{L^2(d\varrho_{\a,\b})}$.

Note that $P_0^{\a,\b} \equiv 1$.
We state some other useful formulas, valid for $n \ge 0$:
\begin{align}
P_n^{\a,\b}(1) & 
= \frac{(\a+1)_n}{n!} = \frac{\Gamma(n+\a+1)}{\Gamma(n+1)\Gamma(\a+1)}, \label{v1} \\
\frac{d}{dx} P_n^{\a,\b}(x) & = \frac{1}2 (n+\a+\b+1) P^{\a+1,\b+1}_{n-1} (x). \label{diff_Jac}
\end{align}
By convention, $P_{k}^{\a,\b} \equiv 0$ for $k < 0$.
The Pochhammer symbol is defined as $(a)_k = a(a+1)\cdot\ldots\cdot (a+k-1)$ for $k \ge 1$, with the convention that $(a)_0=1$.

\subsection*{Jacobi polynomials and complex values of the parameters}
In this subsection we consider $\a,\b \in \mathbb{C}$, see \cite[Chapter IV, Section 4.22]{Sz}.

Jacobi polynomials have natural extensions to all complex
parameters, since, defined a priori as orthogonal polynomials for $\a,\b>-1$,
they are polynomials not only in their argument, but also in the parameters.
One has $P^{\a,\b}_0 \equiv 1$, and for $n \ge 1$
\begin{align*}
P_n^{\a,\b}(x) & = \frac{1}{n!}\sum_{k=0}^n \binom{n}{k} (n+\a+\b+1)_k\; (\a+k+1)_{n-k}\; \Big(\frac{x-1}2\Big)^k.
\end{align*}
Note that $P_n^{\a,\b}$ is a polynomial of degree at most $n$ in $\alpha$ and $\beta$.
And $P^{\a,\b}_n(x)$ is a polynomial of degree at most $n$ in $x$
(the degree is equal to $n$ if e.g.\ $\a,\b > -1$; for more details, see \cite[Chapter IV, Section 4.22]{Sz}).

The formulas \eqref{v1} and \eqref{diff_Jac} hold for all complex parameters (understood as appropriate limits whenever necessary).
Moreover, the $P_n^{\a,\b}$ are eigenfunctions of the Jacobi differential operator given by
$$
J^{\a,\b} = -\big(1-x^2\big) \frac{d^2}{dx^2} - \big[ \b-\a -(\a+\b+2)x\big] \frac{d}{dx}.
$$
More precisely, one has
\begin{equation} \label{ode}
J^{\a,\b} P_n^{\a,\b} = n(n+\a+\b+1) P_n^{\a,\b}, \qquad n \ge 0.
\end{equation}

\subsection*{Measures and some integrals}
We observe  that
$$
\varrho_{\a,\b}\big([-1,1]\big) = h_0^{\a,\b} = 2^{\a+\b+1}\, \frac{\Gamma(\a+1)\Gamma(\b+1)}{\Gamma(\a+\b+2)}, \qquad \a,\b > -1.
$$
For complex $\a$ such that $\Re \a > -1$ and $\a \neq -1/2$, define
$$
\Pi_{\a}(u) = \frac{\Gamma(\a+1)}{\sqrt{\pi} \Gamma(\a+1/2)} \int_0^{u} \big(1-w^2\big)^{\a-1/2}\, dw.
$$
Notice that $\Pi_{\a}$ is an odd function in $-1 < u < 1$, increasing if $\a > -1/2$ and decreasing if $\a < -1/2$.

For real $\a > -1/2$ the measure $d\Pi_{\a}$ is a probability measure in the interval $[-1,1]$,
as seen by changing variable to $v = w^2$ in the integral above.
As $\a \searrow -1/2$, the weak limit of $d\Pi_{\a}$ is
$$
d\Pi_{-1/2} := \frac{\delta_{-1} + \delta_1}2,
$$
where $\delta_{\pm 1}$ is a unit point mass at $\pm 1$.

In general, for all complex $\a$ such that $\Re \a > -1$, $\a \neq -1/2$, the distribution derivative $d\Pi_{\a}$ is
a local complex measure in $(-1,1)$. For real $\a \in (-1,-1/2)$ its density is negative, even, and not integrable in $(-1,1)$.
Furthermore, we have the following lemma.
\begin{lemma}[{\cite[Lemma 2.2]{NSS0}}] \label{lem:mes}
Let $\a \in (-1,-1/2)$ be fixed. Then the density $|\Pi_{\a}(u)|$ defines a finite measure on $[-1,1]$ and
$$
|\Pi_{\a}(u)| \simeq |u|\big(1-|u|\big)^{\a+1/2} \simeq |u|\frac{d\Pi_{\a+1}(u)}{du}, \qquad u \in (-1,1).
$$
\end{lemma}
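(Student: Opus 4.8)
\emph{Plan of proof.} I would argue directly from the definition, namely $\Pi_\a(u)=\frac{\Gamma(\a+1)}{\sqrt\pi\,\Gamma(\a+1/2)}\int_0^u(1-w^2)^{\a-1/2}\,dw$. Since $\a\in(-1,-1/2)$ gives $\a+1/2\in(-1/2,0)$, the factor $\Gamma(\a+1/2)$ is negative while $\Gamma(\a+1)>0$, so the leading constant has a fixed sign; as the integrand is positive and even on $(-1,1)$, this yields $|\Pi_\a(u)|\simeq\int_0^{|u|}(1-w^2)^{\a-1/2}\,dw$ with constants depending only on $\a$. Hence it suffices to prove $\int_0^u(1-w^2)^{\a-1/2}\,dw\simeq u(1-u)^{\a+1/2}$ for $u\in(0,1)$; the estimate for general $u\in(-1,1)$ then follows by replacing $u$ with $|u|$ and using $1+|u|\simeq1$, while the point $u=0$ is trivial.

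For $u\in(0,1/2]$ this is immediate, since on $[0,u]$ one has $1-w^2\simeq1$, hence $(1-w^2)^{\a-1/2}\simeq1$ and $\int_0^u(1-w^2)^{\a-1/2}\,dw\simeq u$, while $(1-u)^{\a+1/2}\simeq1$ there. For $u\in[1/2,1)$ I would first replace $(1-w^2)^{\a-1/2}$ by $(1-w)^{\a-1/2}$, which is legitimate because $1+w\simeq1$ uniformly on $[0,1)$, reducing matters to showing $\int_0^u(1-w)^{\a-1/2}\,dw\simeq(1-u)^{\a+1/2}$ (note $u\simeq1$ here). Splitting $\int_0^u=\int_0^{1/2}+\int_{1/2}^u$, the first piece is a fixed positive constant and the second equals $|\a+1/2|^{-1}\big((1-u)^{\a+1/2}-(1/2)^{\a+1/2}\big)\le|\a+1/2|^{-1}(1-u)^{\a+1/2}$; since $1-u\le1/2$ forces $(1-u)^{\a+1/2}\ge1$, the constant piece is also $\lesssim(1-u)^{\a+1/2}$, which gives the upper bound. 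For the matching lower bound I would shrink the domain to the subinterval $[2u-1,u]\subset[0,u]$, of length $1-u$; on it $1-w\le2(1-u)$, so $(1-w)^{\a-1/2}\gtrsim(1-u)^{\a-1/2}$, and integrating gives $\int_0^u(1-w)^{\a-1/2}\,dw\ge\int_{2u-1}^u(1-w)^{\a-1/2}\,dw\gtrsim(1-u)\cdot(1-u)^{\a-1/2}=(1-u)^{\a+1/2}$.

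Combining the two ranges establishes $|\Pi_\a(u)|\simeq|u|(1-|u|)^{\a+1/2}$ on $(-1,1)$. Finiteness of the corresponding measure is then clear, because $\int_{-1}^1|u|(1-|u|)^{\a+1/2}\,du=2\int_0^1u(1-u)^{\a+1/2}\,du<\infty$ as $\a+1/2>-1$. Finally, for the second equivalence I would simply compute $d\Pi_{\a+1}(u)/du=\frac{\Gamma(\a+2)}{\sqrt\pi\,\Gamma(\a+3/2)}(1-u^2)^{\a+1/2}$ — here the constant is positive because $\a+3/2>0$ — and use $(1-u^2)^{\a+1/2}=(1-|u|)^{\a+1/2}(1+|u|)^{\a+1/2}\simeq(1-|u|)^{\a+1/2}$; multiplying by $|u|$ closes the chain of equivalences. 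The only genuinely delicate point is the lower bound near $u=1$: the closed form of the antiderivative exhibits a cancellation at the splitting point, so one needs a device such as the shifted interval $[2u-1,u]$ above to produce a clean two-sided bound; the rest is routine comparison of powers.
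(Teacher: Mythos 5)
Your proof is correct: the sign analysis of $\Gamma(\a+1/2)<0$ for $\a+1/2\in(-1/2,0)$, the two-range comparison of $\int_0^u(1-w^2)^{\a-1/2}\,dw$ with $u(1-u)^{\a+1/2}$ (including the shifted interval $[2u-1,u]$ to avoid the cancellation in the antiderivative), and the identification of $d\Pi_{\a+1}/du$ are all sound. The paper itself does not prove this lemma but quotes it from \cite[Lemma 2.2]{NSS0}, and your direct computation from the definition of $\Pi_\a$ is exactly the kind of elementary verification that reference supplies, so there is no substantive difference of approach to report.
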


We define
$$
\mathcal{A} := \big\{ (\a,\b) \in \mathbb{C}^2 : \Re\a > -1, \Re\b > -1 \big\}.
$$
Observe that, see \cite[Section 2]{NSS0}, if $\phi$ is a continuous function in $[0,1]$ satisfying
$|\phi(u)| \le C (1-u)$, with a constant $C = C(\a)$ that is locally bounded, then the integral
$$
I(\a) = \int_{(0,1]} \phi(u) \, d\Pi_{\a}(u)
$$
is well defined and  analytic in $\{\a : \Re\a > -1\}$. Moreover,
$I(-1/2)=0$, as seen from  the bound $|I(\a)| \lesssim |\a+1/2|\int_0^1 (1-u^2)^{\Re\a + 1/2}du$.
This observation generalizes as follows, cf.\ \cite[Section~2]{NSS0}.
\begin{proposition} \label{prop:I}
\qquad
\begin{itemize}
\item[(a)]
Assume that $\phi_{\a,\b}(u)$ is continuous in $(u,\a,\b) \in [0,1]\times \mathcal{A}$
and analytic in $(\a,\b) \in \mathcal{A}$ for each $u \in (0,1)$ and satisfies
 $|\phi_{\a,\b}(u)| \le C(1-u)$, where the constant $C = C(\a,\b)$ is locally bounded in $\mathcal{A}$. Then                         
$$
I(\a,\b) = \int_{(0,1]} \phi_{\a,\b}(u)\, d\Pi_{\a}(u)
$$
is well defined and analytic in $(\a,\b) \in \mathcal{A}$. Moreover, $I(-1/2,\b) = 0$.
\item[(b)]
Let $\phi_{\a,\b}(u,v)$ be continuous in $(u,v,\a,\b) \in [0,1]^2\times\mathcal{A}$
and analytic in $(\a,\b) \in \mathcal{A}$ for each
$u,v \in (0,1)$, and assume that  $|\phi_{\a,\b}(u,v)| \le C(1-u)(1-v)$ with $C$ as in {\rm{(a)}}.
Then the double integral
$$
I(\a,\b) = \iint_{(0,1]^2} \phi_{\a,\b}(u,v) \, d\Pi_{\a}(u)\, d\Pi_{\b}(v)
$$
is well defined and analytic in $(\a,\b) \in \mathcal{A}$. Furthermore, $I(-1/2,\b) = I(\a,-1/2) = 0$.
\end{itemize}
\end{proposition}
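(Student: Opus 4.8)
The plan is to realize $I(\a,\b)$ as an explicit absolutely convergent parameter integral, to invoke the standard theorem on analyticity of integrals depending analytically on a parameter, and finally to read off the vanishing at $\a=-1/2$ from a zero of $1/\Gamma$. First I would record that on $(-1,1)$ the local measure $d\Pi_\a$ has density $\Pi_\a'(u)=\frac{\Gamma(\a+1)}{\sqrt{\pi}\,\Gamma(\a+1/2)}(1-u^2)^{\a-1/2}$, so that, with the pairing understood as in \cite[Section~2]{NSS0},
$$
I(\a,\b)=\frac{\Gamma(\a+1)}{\sqrt{\pi}\,\Gamma(\a+1/2)}\int_0^1\phi_{\a,\b}(u)\,(1-u^2)^{\a-1/2}\,du.
$$
Using $1-u\simeq 1-u^2$ on $[0,1]$ together with the hypothesis $|\phi_{\a,\b}(u)|\le C(\a,\b)(1-u)$, the integrand is dominated by $C(\a,\b)(1-u^2)^{\Re\a+1/2}$, which is integrable over $(0,1)$ since $\Re\a+1/2>-1$ throughout $\mathcal{A}$. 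This gives well-definedness and, incidentally, shows that the only place where the endpoint $u=1$ is delicate --- namely $\Re\a\le-1/2$, where $d\Pi_\a$ is not a genuine measure near $1$ --- is taken care of by the vanishing of $\phi_{\a,\b}$ there.

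For analyticity I would fix a compact $K\subset\mathcal{A}$ and check the hypotheses of the standard lemma. For each $u\in(0,1)$ the integrand is analytic in $(\a,\b)$: $\phi_{\a,\b}(u)$ is analytic by assumption, and $(1-u^2)^{\a-1/2}=\exp\bigl((\a-1/2)\log(1-u^2)\bigr)$ is entire in $\a$. On $K$ the factor $C(\a,\b)$ is bounded and $\Re\a$ stays in a compact subset of $(-1,\infty)$, so $(1-u^2)^{\Re\a+1/2}\le(1-u^2)^{s_0}$ for some $s_0=s_0(K)>-1$; hence the majorant above is $K$-uniform and integrable. Differentiation under the integral sign (equivalently, Morera plus Fubini on a small contour) then yields analyticity of the integral on $\mathcal{A}$, and multiplication by the prefactor $\Gamma(\a+1)/(\sqrt{\pi}\,\Gamma(\a+1/2))$ --- analytic on $\{\Re\a>-1\}$, since $1/\Gamma$ is entire and $\Gamma(\a+1)$ has no pole there --- shows that $I$ is analytic on $\mathcal{A}$.

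It remains to see that $I(-1/2,\b)=0$. I would write $I(\a,\b)=\Gamma(\a+1/2)^{-1}g(\a,\b)$, where $g(\a,\b)=\frac{\Gamma(\a+1)}{\sqrt{\pi}}\int_0^1\phi_{\a,\b}(u)(1-u^2)^{\a-1/2}\,du$ is analytic near $\a=-1/2$ by the previous paragraph; moreover the integral defining $g$ converges even at $\a=-1/2$, where its integrand is bounded by $C(-1/2,\b)(1-u)(1-u^2)^{-1}\lesssim C(-1/2,\b)$, so $g$ is bounded near $\a=-1/2$. Since $1/\Gamma$ has a simple zero at $0$, it follows that $I(-1/2,\b)=0$. (This is consistent with the fact that $d\Pi_{-1/2}$ restricted to $(0,1]$ equals $\tfrac12\delta_1$, so that $I(-1/2,\b)=\tfrac12\phi_{-1/2,\b}(1)=0$ directly; note the crucial role of the decay of $\phi$ at $u=1$, without which one would face a $0\cdot\infty$ situation.) This proves (a).

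Part (b) is entirely parallel: $I(\a,\b)$ equals the product of the two $\Gamma$-prefactors times $\iint_{(0,1)^2}\phi_{\a,\b}(u,v)(1-u^2)^{\a-1/2}(1-v^2)^{\b-1/2}\,du\,dv$, the bound $|\phi_{\a,\b}(u,v)|\le C(\a,\b)(1-u)(1-v)$ furnishes the integrable majorant $C(\a,\b)(1-u^2)^{\Re\a+1/2}(1-v^2)^{\Re\b+1/2}$ (integrability by Tonelli), joint analyticity of the integrand together with the $K$-uniform domination gives analyticity of $I$ on $\mathcal{A}$, and factoring out $\Gamma(\a+1/2)^{-1}$ (resp.\ $\Gamma(\b+1/2)^{-1}$) --- with the complementary factor staying analytic and locally bounded near $\a=-1/2$ (resp.\ $\b=-1/2$), since $(1-u)(1-u^2)^{-1}$ is bounded while $(1-v)(1-v^2)^{\Re\b-1/2}\lesssim(1-v^2)^{\Re\b+1/2}$ stays integrable --- yields $I(-1/2,\b)=I(\a,-1/2)=0$. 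I do not anticipate a real obstacle: the argument is the routine ``parameter integral is analytic, a $\Gamma$-prefactor supplies the zero'' scheme, and the only care needed is in interpreting $\int_{(0,1]}\phi\,d\Pi_\a$ when $\Re\a\le-1/2$ (resolved by the $(1-u)$-decay of $\phi$) and in choosing the dominating function uniformly on compacta of $\mathcal{A}$, which is exactly where the local boundedness of $C(\a,\b)$ and of the $\Gamma$-quotient is used.
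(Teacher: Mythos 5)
Your proof is correct and follows essentially the same route the paper takes (the paper only sketches it, deferring to \cite[Section~2]{NSS0}): express the integral via the explicit density of $d\Pi_{\a}$, dominate the integrand using $|\phi_{\a,\b}(u)|\le C(1-u)$ to get well-definedness and analyticity, and read off the vanishing at $\a=-1/2$ from the zero of $1/\Gamma(\a+1/2)$, i.e.\ the bound $|I(\a,\b)|\lesssim|\a+1/2|\int_0^1(1-u^2)^{\Re\a+1/2}\,du$.
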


\section{Dijksma-Koornwinder formula} \label{sec:DK}

In \cite{DK} Dijksma and Koornwinder found a symmetric \emph{product formula} for two Jacobi polynomials,
which was originally stated as follows (cf.\ \cite[(2.5)]{DK}):
\begin{align}   \label{DijKo}
& P_n^{\a,\b}\big(1-2s^2\big) P_n^{\a,\b}\big(1-2t^2\big) =
\frac{\Gamma(\a+\b+1)\Gamma(n+\a+1)\Gamma(n+\b+1)}{\pi n! \Gamma(n+\a+\b+1) \Gamma(\a+1/2)\Gamma(\b+1/2)} \\
& \quad \times \int_{-1}^1 \int_{-1}^1 C_{2n}^{\a+\b+1}\Big( u s t + v \sqrt{1-s^2}\sqrt{1-t^2}\Big)
	\big(1-u^2\big)^{\a-1/2} \big(1-v^2\big)^{\b-1/2}\, du\, dv.   \nonumber
\end{align}
Here $-1\le s,t \le 1$, and
the formula is valid for $n \ge 0$ and all complex $\a,\b$ such that $\Re \a > -1/2$ and $\Re \b > -1/2$.
The proof in \cite{DK} is group theoretic.
An analytic proof was given slightly later by Koornwinder \cite{K}.
Yet another proof can be found in the more recent paper \cite{Neu}.

We will rewrite and extend the product formula to all $\a,\b$ such that $\Re \a > -1$, $\Re \b > -1$,
i.e., $(\a,\b) \in \mathcal{A}$; see Theorem \ref{thm:DK_ext} below.
For this purpose we define, to begin with  for $\a,\b > -1$ such that $\a+\b+1/2 > -1$ and for $\lambda>-1$,
$$
\mathfrak{C}_n^{\a,\b} := \frac{h_0^{\a,\b}}{(\a+\b+3/2)h_0^{\a+\b+1/2}h_{n}^{\a,\b}},  \quad
n \ge 0,   \qquad  \quad  \mathrm{and}         \qquad \quad \;
\mathfrak{D}_k^{\lambda} := \frac{P_k^{\lambda}(1)}{(\lambda+1)h_k^{\lambda}},  \quad k \ge 0.
$$

In what follows we use implicitly the duplication formula for the gamma function, written as
$$
\Gamma(2\lambda+2) = \Gamma(\lambda+1) \Gamma(\lambda+3/2) 2^{2\lambda+1}/\sqrt{\pi}.
$$
 By a direct computation we see that
\begin{equation} \label{C}
\mathfrak{C}_n^{\a,\b} =
	\begin{cases}
		\frac{\Gamma(\a+1)\Gamma(\b+1)(2n+\a+\b+1)\Gamma(n+\a+\b+1)\Gamma(n+1)}{\sqrt{\pi}\Gamma(\a+\b+5/2)\Gamma(n+\a+1)\Gamma(n+\b+1)},
			& n \ge 1, \\
		\frac{\Gamma(\a+\b+2)}{\sqrt{\pi}\Gamma(\a+\b+5/2)}, & n=0,
	\end{cases}
\end{equation}
and this provides an analytic extension of each $\mathfrak{C}_n^{\a,\b}$, $n\ge 0$,
to $(\a,\b) \in \mathcal{A}$. Similarly for $\lambda > -1$
\begin{equation} \label{D}
\mathfrak{D}_k^{\lambda} =
	\begin{cases}
		\frac{(2k+2\lambda+1)\Gamma(k+2\lambda+1)}{2^{2\lambda+1}\Gamma(\lambda+2)\Gamma(k+\lambda+1)}, & k \ge 1, \\
		\frac{\Gamma(\lambda+3/2)}{\sqrt{\pi}\Gamma(\lambda+2)}, & k = 0.
	\end{cases}
\end{equation}
The last expression defines an analytic extension of each
$\mathfrak{D}_{2n}^{\lambda}$, $n \ge 0$, to $\Re \lambda > -3/2$.
Thus, from now on, we consider $\mathfrak{C}_{n}^{\a,\b}$ and $\mathfrak{D}_{2n}^{\a+\b+1/2}$ as analytic functions
of $(\a,\b) \in \mathcal{A}$ given by \eqref{C} and \eqref{D}, respectively.
Note that $\mathfrak{C}_{n}^{\a,\b}, \mathfrak{D}_{2n}^{\a+\b+1/2} > 0$ for $\a,\b \in (-1,\infty)$.

We denote
\begin{equation} \label{def_Phi}
\Phi_k^{\lambda}(\theta,\varphi,u,v) :=
\mathfrak{D}_k^{\lambda}
	{P}_{k}^{\lambda}\Big(u \sin\frac{\theta}2\sin\frac{\varphi}2 + v\cos\frac{\theta}2\cos\frac{\varphi}2\Big),
\end{equation}
and let $s = \sin\frac{\theta}2$ and  $t = \sin\frac{\varphi}2$ in \eqref{DijKo}. By means of  \eqref{C} and \eqref{D},
one then finds the following more compact form of the Dijksma-Koornwinder formula \eqref{DijKo}:
\begin{equation} \label{comp}
\mathfrak{C}_n^{\a,\b} {P}_n^{\a,\b}(\cos\theta) {P}_n^{\a,\b}(\cos\varphi) =	
	\iint \Phi_{2n}^{\a+\b+1/2}(\theta,\varphi,u,v) \, d\Pi_{\a}(u)\, d\Pi_{\b}(v),
\quad  0 \le \theta, \varphi \le \pi.
\end{equation}
This holds for   $\Re \a > -1/2$ and $\Re \b > -1/2$, and since Jacobi polynomials are continuous functions of their parameters,
it is also valid when $\a=-1/2$ or $\b=-1/2$.

We now proceed essentially as in \cite[Section 2]{NSS0}.
Taking the even parts of $\Phi_k^{\lambda}(\theta,\varphi,u,v)$ in $u$ and $v$, we define
$$
\Phi_{k,E}^{\lambda}(\theta,\varphi,u,v) := \frac{1}{4} \sum_{\xi,\eta = \pm 1} \Phi_k^{\lambda}(\theta,\varphi,\xi u,\eta v).
$$
Then by \eqref{comp} and for symmetry reasons,  one has for $(\a,\b)$ such that $\Re\a,\Re\b > -1/2$
\begin{equation} \label{compE}
 \mathfrak{C}_n^{\a,\b} {{P}_n^{\a,\b}(\cos\theta) {P}_n^{\a,\b}(\cos\varphi)}
= 4 \iint_{(0,1]^2} \Phi_{2n,E}^{\a+\b+1/2}(\theta,\varphi,u,v)\,
	d\Pi_{\a}(u)\, d\Pi_{\b}(v).
\end{equation}
\begin{lemma} \label{lem:int1}
For all $(\a,\b) \in \mathcal{A}$, $n \ge 0$ and $\theta,\varphi \in [0,\pi]$
\begin{align*}
\mathfrak{C}_n^{\a,\b} {{P}_n^{\a,\b}(\cos\theta) {P}_n^{\a,\b}(\cos\varphi)} & =
4 \iint_{(0,1]^2} \Big[ \Phi_{2n,E}^{\a+\b+1/2}(\theta,\varphi,u,v) - \Phi_{2n,E}^{\a+\b+1/2}(\theta,\varphi,u,1) \\
& \qquad	- \Phi_{2n,E}^{\a+\b+1/2}(\theta,\varphi,1,v) + \Phi_{2n,E}^{\a+\b+1/2}(\theta,\varphi,1,1) \Big]
	\, d\Pi_{\a}(u)\, d\Pi_{\b}(v) \\
& \quad + 2 \int_{(0,1]} \Big[ \Phi_{2n,E}^{\a+\b+1/2}(\theta,\varphi,u,1) - \Phi_{2n,E}^{\a+\b+1/2}(\theta,\varphi,1,1) \Big]
	\, d\Pi_{\a}(u) \\
& \quad + 2 \int_{(0,1]} \Big[ \Phi_{2n,E}^{\a+\b+1/2}(\theta,\varphi,1,v) - \Phi_{2n,E}^{\a+\b+1/2}(\theta,\varphi,1,1) \Big]
	\, d\Pi_{\b}(v) \\
& \quad + \Phi_{2n,E}^{\a+\b+1/2}(\theta,\varphi,1,1).
\end{align*}
\end{lemma}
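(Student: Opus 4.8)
The statement of Lemma~\ref{lem:int1} is an algebraic identity: the right-hand side is simply a regrouping of $4\iint \Phi_{2n,E}\, d\Pi_\a d\Pi_\b$ obtained by adding and subtracting the ``boundary'' values $\Phi_{2n,E}(\cdot,\cdot,u,1)$, $\Phi_{2n,E}(\cdot,\cdot,1,v)$ and $\Phi_{2n,E}(\cdot,\cdot,1,1)$. So the proof has two distinct parts. First, one shows that for $(\a,\b)$ with $\Re\a,\Re\b>-1/2$ the right-hand side of the asserted identity equals the right-hand side of \eqref{compE}, hence equals $\mathfrak{C}_n^{\a,\b}P_n^{\a,\b}(\cos\theta)P_n^{\a,\b}(\cos\varphi)$. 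Second, one shows that \emph{each} of the four terms on the right-hand side is well defined and analytic in $(\a,\b)\in\mathcal{A}$; since the left-hand side $\mathfrak{C}_n^{\a,\b}P_n^{\a,\b}(\cos\theta)P_n^{\a,\b}(\cos\varphi)$ is manifestly analytic on $\mathcal{A}$ (being a polynomial in $\a,\b$ times the analytic function $\mathfrak{C}_n^{\a,\b}$ from \eqref{C}), the identity extends from the subdomain $\{\Re\a,\Re\b>-1/2\}$ to all of $\mathcal{A}$ by analytic continuation, the subdomain being connected and having $\mathcal{A}$ as its ``natural'' connected envelope. This is exactly the strategy used in \cite[Section~2]{NSS0} for the Gegenbauer case, and the lemma is its bivariate Jacobi analogue.

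\textbf{Step 1 (the algebraic rearrangement for $\Re\a,\Re\b>-1/2$).} Here $d\Pi_\a$ and $d\Pi_\b$ are genuine probability measures on $[-1,1]$, and by the evenness built into $\Phi_{k,E}^\lambda$ the integrals over $(0,1]$ carry full mass $1/2$ each (the point masses that appear in the $\a=-1/2$ or $\b=-1/2$ limits are irrelevant here since $\Re\a,\Re\b>-1/2$ strictly, but in any case the identity is a formal consequence of additivity of the integral). Writing $\Psi(u,v):=\Phi_{2n,E}^{\a+\b+1/2}(\theta,\varphi,u,v)$ for brevity, the bracket in the first (double) integral is $\Psi(u,v)-\Psi(u,1)-\Psi(1,v)+\Psi(1,1)$, and integrating term by term against $d\Pi_\a(u)\,d\Pi_\b(v)$ and using $\int_{(0,1]}d\Pi_\a=\int_{(0,1]}d\Pi_\b=1/2$ one recovers $4\iint_{(0,1]^2}\Psi\,d\Pi_\a d\Pi_\b$ minus the three correction terms, which are precisely the second, third and fourth terms on the right-hand side written with the opposite sign. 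Cancelling, the right-hand side collapses to $4\iint_{(0,1]^2}\Psi\,d\Pi_\a d\Pi_\b$, which by \eqref{compE} equals the left-hand side. This step is elementary bookkeeping.

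\textbf{Step 2 (analyticity on $\mathcal{A}$, and the main obstacle).} This is where the work is. One must verify that each of the four terms individually extends analytically to $(\a,\b)\in\mathcal{A}$. The fourth term $\Phi_{2n,E}^{\a+\b+1/2}(\theta,\varphi,1,1)$ is a value of a Jacobi polynomial with parameter $\a+\b+1/2$ (whose $\mathfrak{D}$-coefficient \eqref{D} is analytic for $\Re(\a+\b+1/2)>-3/2$, in particular on all of $\mathcal{A}$), so it is trivially analytic. For the two single integrals, say $2\int_{(0,1]}[\Psi(u,1)-\Psi(1,1)]\,d\Pi_\a(u)$, one applies Proposition~\ref{prop:I}(a) with $\phi_{\a,\b}(u):=2[\Psi(u,1)-\Psi(1,1)]$ restricted to $u\in[0,1]$: continuity in $(u,\a,\b)$ and analyticity in $(\a,\b)$ are clear, and the crucial point is the bound $|\phi_{\a,\b}(u)|\le C(1-u)$ with $C$ locally bounded on $\mathcal{A}$. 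This Lipschitz-type estimate at $u=1$ must come from differentiating the Jacobi polynomial $P_{2n}^{\a+\b+1/2}$ appearing in $\Psi$: by \eqref{diff_Jac} the argument-derivative of $P_{2n}^{\a+\b+1/2}$ is a bounded multiple of $P_{2n-1}^{\a+\b+3/2,\a+\b+3/2}$, which is bounded on $[-1,1]$, and the argument $u\sin\tfrac\theta2\sin\tfrac\varphi2+\cos\tfrac\theta2\cos\tfrac\varphi2$ (at $v=1$) depends Lipschitz-continuously on $u$ with constant $\sin\tfrac\theta2\sin\tfrac\varphi2\le 1$; hence $|\Psi(u,1)-\Psi(1,1)|\lesssim(1-u)$, with a constant locally bounded in $\a,\b$ by the $\mathfrak{D}$-factor. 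For the double integral one likewise invokes Proposition~\ref{prop:I}(b) with $\phi_{\a,\b}(u,v):=4[\Psi(u,v)-\Psi(u,1)-\Psi(1,v)+\Psi(1,1)]$, for which one needs the mixed bound $|\phi_{\a,\b}(u,v)|\lesssim(1-u)(1-v)$; this requires a second-order estimate, which follows by applying \eqref{diff_Jac} twice (once in $u$ and once in $v$) — the mixed second difference of $P_{2n}^{\a+\b+1/2}$ evaluated along the curve $(u,v)\mapsto u\sin\tfrac\theta2\sin\tfrac\varphi2+v\cos\tfrac\theta2\cos\tfrac\varphi2$ is controlled by $\|(P_{2n}^{\a+\b+1/2})''\|_\infty$ on $[-1,1]$ times $(1-u)(1-v)$, again with a locally bounded constant. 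The main obstacle is thus precisely the verification of this second-order (mixed-difference) bound uniformly on compact subsets of $\mathcal{A}$; once it is in place, Proposition~\ref{prop:I} delivers analyticity of all four terms, and the identity propagates from $\{\Re\a,\Re\b>-1/2\}$ to $\mathcal{A}$ by the identity theorem for holomorphic functions of two variables (applied first in $\a$ with $\b$ fixed, then in $\b$), completing the proof.
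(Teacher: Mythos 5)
Your proposal is correct and follows essentially the same route as the paper: verify the identity for $\Re\a,\Re\b>-1/2$ by evaluating \eqref{compE} term by term (using $\Pi_\a(1)=1/2$), then establish analyticity of each term on $\mathcal{A}$ via Proposition \ref{prop:I}(a),(b) with the first- and mixed second-order derivative bounds coming from \eqref{diff_Jac}, and conclude by analytic continuation. The only difference is that you spell out the bookkeeping and the Lipschitz/mixed-difference estimates in more detail than the paper does.
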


\begin{proof}
When $\Re\a, \Re\b > -1/2$, the claimed identity  is a straightforward consequence of \eqref{compE},
since the integrals can be evaluated term by term.
The left-hand side of the identity is analytic in $(\a,\b) \in \mathcal{A}$.
It suffices to verify that the right-hand side is analytic in
$\mathcal{A}$, since then the conclusion will follow by  analytic continuation.

The last term of the right-hand side is analytic in $(\a,\b)\in \mathcal{A}$, as easily  seen from \eqref{def_Phi}.
The second term of the right-hand side, i.e., the integral in $u$,
is of the form $I(\a,\b)$ from item (a) of Proposition \ref{prop:I}, with
$$\phi_{\a,\b}(u,v) = \Phi_{2n,E}^{\a+\b+1/2}(\theta,\varphi,u,1) - \Phi_{2n,E}^{\a+\b+1/2}(\theta,\varphi,1,1).$$
Note that $|\phi_{\a,\b}(u)| \le C(1-u)$, since the derivative
$\partial_u \Phi_{2n,E}^{\a+\b+1/2}(\theta,\varphi,u,1)$ is bounded locally uniformly in $\a$ and $\b$,
see \eqref{diff_Jac}. The third term of the right-hand side, i.e., the integral in $v$, is analogous.

Finally, the first term of the right-hand side, i.e., the double integral,
is of the form $I(\a,\b)$ from item (b) of Proposition \ref{prop:I}, with
\begin{align*}
& \phi_{\a,\b}(u,v) \\
& \quad = \Phi_{2n,E}^{\a+\b+1/2}(\theta,\varphi,u,v) - \Phi_{2n,E}^{\a+\b+1/2}(\theta,\varphi,u,1)
		- \Phi_{2n,E}^{\a+\b+1/2}(\theta,\varphi,1,v) + \Phi_{2n,E}^{\a+\b+1/2}(\theta,\varphi,1,1),
\end{align*}
and the condition $|\phi_{\a,\b}(u,v)| \le C(1-u)(1-v)$ is satisfied since the second order derivatives
of $\Phi_{2n,E}^{\a+\b+1/2}(\theta,\varphi,u,v)$ in $u,v$ are bounded locally uniformly in $\a$ and $\b$, cf.\ \eqref{diff_Jac}.

Altogether, it follows that the right-hand side of the identity is analytic in $(\a,\b) \in \mathcal{A}$.
This finishes the proof.
\end{proof}

The next result is the announced extension of the Dijksma-Koornwinder formula. We state it for real values of the parameters.
A variant with complex parameters is indicated in Remark~\ref{rem:complex} below.
For the sake of completeness, the original Dijksma-Koornwinder formula is included as item (i).
By convention, $\Phi_k^{\lambda}(\theta,\varphi,u,v) \equiv 0$ when $k < 0$.
\begin{theorem} \label{thm:DK_ext}
Let $n \ge 0$ and $\theta,\varphi \in [0,\pi]$. Then, with all integrations taken over $[-1,1]^2$,
\begin{itemize}
\item[(i)] If $\a,\b \ge -1/2$, then
$$
\mathfrak{C}_n^{\a,\b} {{P}_n^{\a,\b}(\cos\theta) {P}_n^{\a,\b}(\cos\varphi)}
= \iint \Phi_{2n}^{\a+\b+1/2}(\theta,\varphi,u,v)\, d\Pi_{\a}(u) \, d\Pi_{\b}(v).
$$
\item[(ii)] If $-1 < \b < -1/2 \le \a$, then
\begin{align*}
& \mathfrak{C}_n^{\a,\b} {{P}_n^{\a,\b}(\cos\theta) {P}_n^{\a,\b}(\cos\varphi)}\\ &
\quad = \iint \Big\{ -2(\a+\b+5/2) \cos\frac{\theta}2\cos\frac{\varphi}2\, \Phi_{2n-1}^{\a+\b+3/2}(\theta,\varphi,u,v)\,
	d\Pi_{\a}(u)\, \Pi_{\b}(v)\, dv \\
& \qquad \qquad + \Phi_{2n}^{\a+\b+1/2}(\theta,\varphi,u,v) \, d\Pi_{\a}(u)\, d\Pi_{-1/2}(v) \Big\}.
\end{align*}
\item[(iii)] If $-1 < \a < -1/2 \le \b$, then
\begin{align*}
& \mathfrak{C}_n^{\a,\b} {{P}_n^{\a,\b}(\cos\theta) {P}_n^{\a,\b}(\cos\varphi)}\\ &
\quad = \iint \Big\{ -2(\a+\b+5/2) \sin\frac{\theta}2\sin\frac{\varphi}2\, \Phi_{2n-1}^{\a+\b+3/2}(\theta,\varphi,u,v)\,
	\Pi_{\a}(u)\, du \, d\Pi_{\b}(v) \\
& \qquad \qquad + \Phi_{2n}^{\a+\b+1/2}(\theta,\varphi,u,v) \, d\Pi_{-1/2}(u)\, d\Pi_{\b}(v) \Big\}.
\end{align*}
\item[(iv)] If $-1 < \a,\b < -1/2$, then
\begin{align*}
& \mathfrak{C}_n^{\a,\b} {{P}_n^{\a,\b}(\cos\theta) {P}_n^{\a,\b}(\cos\varphi)}\\ &
\quad = \iint \Big\{ (\a+\b+5/2)(\a+\b+7/2)\sin\theta\sin\varphi \, \Phi_{2n-2}^{\a+\b+5/2}(\theta,\varphi,u,v)\,
	\Pi_{\a}(u)\, du \, \Pi_{\b}(v)\, dv \\
& \qquad \qquad - 2(\a+\b+5/2) \sin\frac{\theta}2\sin\frac{\varphi}2\, \Phi_{2n-1}^{\a+\b+3/2}(\theta,\varphi,u,v)\,
	\Pi_{\a}(u)\, du \, d\Pi_{-1/2}(v) \\
& \qquad \qquad -2(\a+\b+5/2) \cos\frac{\theta}2\cos\frac{\varphi}2\, \Phi_{2n-1}^{\a+\b+3/2}(\theta,\varphi,u,v)\,
	d\Pi_{-1/2}(u)\, \Pi_{\b}(v)\, dv \\
& \qquad \qquad + \Phi_{2n}^{\a+\b+1/2}(\theta,\varphi,u,v)\, d\Pi_{-1/2}(u)\, d\Pi_{-1/2}(v) \Big\}.
\end{align*}
\end{itemize}
\end{theorem}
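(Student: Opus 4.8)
\emph{Proof strategy.} The plan is to deduce (ii)--(iv) from the single identity of Lemma~\ref{lem:int1}, which already holds on all of $\mathcal{A}$, by integrating by parts against $d\Pi_\a$ or $d\Pi_\b$ in precisely those variables whose parameter has dropped below $-1/2$. Two elementary ingredients are prepared first. Combining \eqref{diff_Jac} with the explicit form \eqref{D} of $\mathfrak{D}_k^\lambda$, a direct computation gives, for every $k\ge0$,
\begin{align*}
\partial_u\Phi_k^\lambda(\theta,\varphi,u,v)&=2(\lambda+2)\sin\tfrac\theta2\sin\tfrac\varphi2\,\Phi_{k-1}^{\lambda+1}(\theta,\varphi,u,v),\\
\partial_v\Phi_k^\lambda(\theta,\varphi,u,v)&=2(\lambda+2)\cos\tfrac\theta2\cos\tfrac\varphi2\,\Phi_{k-1}^{\lambda+1}(\theta,\varphi,u,v),
\end{align*}
whence also $\partial_u\partial_v\Phi_k^\lambda=(\lambda+2)(\lambda+3)\sin\theta\sin\varphi\,\Phi_{k-2}^{\lambda+2}$. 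Secondly, for $\gamma\in(-1,-1/2)$ and any $\psi\in C^1([0,1])$ with $\psi(1)=0$ one has the integration-by-parts rule $\int_{(0,1]}\psi(w)\,d\Pi_\gamma(w)=-\int_0^1\psi'(w)\,\Pi_\gamma(w)\,dw$: this is ordinary integration by parts when $\gamma>-1/2$ (both boundary terms vanish, since $\Pi_\gamma(0)=0$ and $\psi(1)=0$), and it persists for $\gamma\in(-1,-1/2)$ by analytic continuation --- the left-hand side being analytic there by Proposition~\ref{prop:I}(a), and the right-hand side because $\psi'$ is bounded while $|\Pi_\gamma(w)|\lesssim(1-w)^{\gamma+1/2}$ by Lemma~\ref{lem:mes}. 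I shall also use that $d\Pi_\gamma$ is an even measure on $[-1,1]$, that $\int_{(0,1]}d\Pi_\gamma=1/2$ whenever $\gamma\ge-1/2$, and that $d\Pi_{-1/2}=(\delta_{-1}+\delta_1)/2$.

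Item (i) is just \eqref{comp}, valid also at $\a=-1/2$ or $\b=-1/2$ by continuity in the parameters. For (ii), assume $-1<\b<-1/2\le\a$ and write $\Phi_E(u,v):=\Phi_{2n,E}^{\a+\b+1/2}(\theta,\varphi,u,v)$. Since $\int_{(0,1]}d\Pi_\a=1/2$, the four terms on the right of Lemma~\ref{lem:int1} collapse to
$$
4\iint_{(0,1]^2}\bigl[\Phi_E(u,v)-\Phi_E(u,1)\bigr]\,d\Pi_\a(u)\,d\Pi_\b(v)\;+\;2\int_{(0,1]}\Phi_E(u,1)\,d\Pi_\a(u).
$$
Integrating by parts in $v$ (legitimate since $\b\in(-1,-1/2)$ and $v\mapsto\Phi_E(u,v)-\Phi_E(u,1)$ is smooth and vanishes at $v=1$) rewrites the first term as $-4\iint_{(0,1]^2}\partial_v\Phi_E(u,v)\,\Pi_\b(v)\,d\Pi_\a(u)\,dv$. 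Now $\Phi_E$ is the part of $\Phi_{2n}^{\a+\b+1/2}$ that is even in each of $u,v$, so $\partial_v\Phi_E$ is exactly the even-in-$u$, odd-in-$v$ part of $\partial_v\Phi_{2n}^{\a+\b+1/2}=2(\a+\b+5/2)\cos\tfrac\theta2\cos\tfrac\varphi2\,\Phi_{2n-1}^{\a+\b+3/2}$; since the measure $d\Pi_\a(u)\,\Pi_\b(v)\,dv$ sees only the even-in-$u$, odd-in-$v$ component of its integrand, this even-part operation may be dropped and the integral rewritten over $[-1,1]^2$, yielding $-2(\a+\b+5/2)\cos\tfrac\theta2\cos\tfrac\varphi2\iint_{[-1,1]^2}\Phi_{2n-1}^{\a+\b+3/2}(\theta,\varphi,u,v)\,d\Pi_\a(u)\,\Pi_\b(v)\,dv$. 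Finally $\Phi_E(u,1)=\tfrac12[\Phi_{2n}^{\a+\b+1/2}(\theta,\varphi,u,1)+\Phi_{2n}^{\a+\b+1/2}(\theta,\varphi,u,-1)]$ is even in $u$, so $2\int_{(0,1]}\Phi_E(u,1)\,d\Pi_\a(u)=\iint_{[-1,1]^2}\Phi_{2n}^{\a+\b+1/2}(\theta,\varphi,u,v)\,d\Pi_\a(u)\,d\Pi_{-1/2}(v)$. Adding the two pieces gives (ii).

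Item (iii) follows from (ii) via the substitution $\theta\mapsto\pi-\theta$, $\varphi\mapsto\pi-\varphi$, $\a\leftrightarrow\b$, $u\leftrightarrow v$, which leaves $\mathfrak{C}_n^{\a,\b}$, each $\Phi_k^{\a+\b+1/2}$ and $\sin\theta\sin\varphi$ invariant, interchanges $\sin\tfrac\theta2\sin\tfrac\varphi2$ with $\cos\tfrac\theta2\cos\tfrac\varphi2$ and $d\Pi_\a$ with $d\Pi_\b$, and uses $P_n^{\a,\b}(\cos\theta)=(-1)^nP_n^{\b,\a}(\cos(\pi-\theta))$. For (iv), assume $-1<\a,\b<-1/2$; now no collapse occurs and one integrates by parts directly in each of the four terms of Lemma~\ref{lem:int1}. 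Two successive integrations by parts --- in $v$, then in $u$ --- turn the double-integral term into $4\iint_{(0,1]^2}\partial_u\partial_v\Phi_E(u,v)\,\Pi_\a(u)\,\Pi_\b(v)\,du\,dv$; since $\partial_u\partial_v\Phi_E$ is the odd-in-$u$, odd-in-$v$ part of $\partial_u\partial_v\Phi_{2n}^{\a+\b+1/2}=(\a+\b+5/2)(\a+\b+7/2)\sin\theta\sin\varphi\,\Phi_{2n-2}^{\a+\b+5/2}$, and only the odd-in-$u$, odd-in-$v$ component contributes against $\Pi_\a(u)\,du\,\Pi_\b(v)\,dv$, this equals the first term of (iv). Integrating by parts in $u$ in $2\int_{(0,1]}[\Phi_E(u,1)-\Phi_E(1,1)]\,d\Pi_\a(u)$ and using $\partial_u\Phi_{2n}^{\a+\b+1/2}=2(\a+\b+5/2)\sin\tfrac\theta2\sin\tfrac\varphi2\,\Phi_{2n-1}^{\a+\b+3/2}$ with the same parity reasoning produces the second term of (iv); symmetrically in $v$ one obtains the third term; and the corner term is $\Phi_{2n,E}^{\a+\b+1/2}(\theta,\varphi,1,1)$, which by the very definition of $\Phi_{k,E}^\lambda$ together with $d\Pi_{-1/2}=(\delta_{-1}+\delta_1)/2$ equals $\iint_{[-1,1]^2}\Phi_{2n}^{\a+\b+1/2}(\theta,\varphi,u,v)\,d\Pi_{-1/2}(u)\,d\Pi_{-1/2}(v)$. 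Summing the four contributions yields (iv).

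The step demanding genuine care is the justification of each integration by parts --- in particular, that every boundary contribution at $u=1$ and $v=1$ vanishes, despite $\Pi_\a$ and $\Pi_\b$ blowing up there, which rests on the bound $|\Pi_\gamma(w)|\lesssim(1-w)^{\gamma+1/2}$ of Lemma~\ref{lem:mes} paired with smooth factors vanishing at least linearly --- together with the routine but error-prone parity bookkeeping used to replace $\Phi_{2n,E}^\lambda$ and its derivatives by the full polynomials $\Phi_k^\lambda$ and to pass from integrals over $(0,1]^2$ to integrals over $[-1,1]^2$.
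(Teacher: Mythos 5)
Your proposal is correct and takes essentially the same route as the paper: it starts from Lemma \ref{lem:int1}, uses the derivative formulas for $\Phi_k^{\lambda}$ coming from \eqref{diff_Jac} and the identity $\tfrac12(k+2\lambda+1)\mathfrak{D}_k^{\lambda}=2(\lambda+2)\mathfrak{D}_{k-1}^{\lambda+1}$, and performs integration by parts against $d\Pi_{\gamma}$ (justified via Lemma \ref{lem:mes} and analytic continuation) together with the parity bookkeeping for $\Phi_{k,E}^{\lambda}$ and the measures. These are exactly the details the paper omits by reference to the proof of \cite[Proposition 2.3]{NSS0}, and your sign and coefficient checks in items (ii)--(iv) all come out right.
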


\begin{proof}
Item (i) is a restatement of \eqref{comp}.
The proofs of the remaining items rely on combining Lemmas \ref{lem:int1} and \ref{lem:mes} with symmetries
of the quantity $\Phi_{k,E}^{\lambda}(\theta,\varphi,u,v)$ and the measures involved. Further, the following
formulas for derivatives of $\Phi_k^{\lambda}$ are useful; they follow from \eqref{diff_Jac}
and the identity
$\frac12\,(k+2\lambda +1)\, \mathfrak{D}_k^{\lambda}  =2(\lambda + 2) \, \mathfrak{D}_{k-1}^{\lambda+1}, \; k \ge 1$:
\begin{align*}
\partial_{u} \Phi_{k}^{\lambda}(\theta,\varphi,u,v) & = 2(\lambda+2) \sin\frac{\theta}2\sin\frac{\varphi}2 \,
	\Phi_{k-1}^{\lambda+1}(\theta,\varphi,u,v), \\
\partial_{v} \Phi_{k}^{\lambda}(\theta,\varphi,u,v) & = 2(\lambda+2) \cos\frac{\theta}2\cos\frac{\varphi}2 \,
	\Phi_{k-1}^{\lambda+1}(\theta,\varphi,u,v), \\
\partial_{u} \partial_{v} \Phi_{k}^{\lambda}(\theta,\varphi,u,v) & = (\lambda+2)(\lambda+3) \sin\theta\sin\varphi \,
	\Phi_{k-2}^{\lambda+2}(\theta,\varphi,u,v).
\end{align*}
The details are analogous to those in the proof of \cite[Proposition 2.3]{NSS0} and thus omitted.
\end{proof}

\begin{remark} \label{rem:complex}
To state Theorem \ref{thm:DK_ext} for complex values of the parameters, let
$$
\mathcal{B}_1 := \{\a \in \mathbb{C} : \Re\a > -1/2\} \cup\{-1/2\},
	\qquad \mathcal{B}_2 := \{\a\in\mathbb{C}: \Re\a > -1\} \setminus \mathcal{B}_1.
$$
Then the cases should be defined, respectively, by (i) $\a,\b \in \mathcal{B}_1$,
(ii) $\a \in \mathcal{B}_1$, $\b \in \mathcal{B}_2$, (iii) $\a\in\mathcal{B}_2$, $\b\in\mathcal{B}_1$,
(iv) $\a,\b \in \mathcal{B}_2$, with the formulas unchanged.
\end{remark}

\section{Reduction formula} \label{sec:red}

Recall that for $\a,\b > -1$, $x,y \in [-1,1]$ and $t > 0$ the Jacobi heat kernel $G_t^{\a,\b}(x,y)$ is given by \eqref{ker_Jac}.
For basic information concerning this kernel, see e.g.\ \cite{NoSj}.
We will use the following lemma from our earlier paper \cite{NSS2}.
\begin{lemma}[{\cite[Lemma 3.3]{NSS2}}] \label{lem:diff}
Let $\a,\b > -1$. Then
\begin{align*}
\frac{d}{dx}\, G_t^{\a,\b}(x,1)
=
2 (\a+1)\, e^{-t(\a+\b+2)}\, G_t^{\a+1,\b+1}(x,1),
\qquad x \in [-1,1], \quad t > 0.
\end{align*}
In particular, the function $x \mapsto G_t^{\a,\b}(x,1)$ is strictly increasing on $[-1,1]$.
\end{lemma}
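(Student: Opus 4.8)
\emph{Plan.} The idea is to differentiate the defining series \eqref{ker_Jac} for $G_t^{\a,\b}(x,1)$ term by term and then recognize the outcome; the only point requiring care is the legitimacy of this term-by-term differentiation. Fix $t>0$. The factors $e^{-tn(n+\a+\b+1)}$ decay super-exponentially in $n$, while for $x\in[-1,1]$ each of $|P_n^{\a,\b}(x)|$, $|(P_n^{\a,\b})'(x)|$, $P_n^{\a,\b}(1)$ and $1/h_n^{\a,\b}$ grows at most polynomially in $n$; for the derivative this follows from \eqref{diff_Jac} together with the classical sup-norm bound for Jacobi polynomials on $[-1,1]$, and for $1/h_n^{\a,\b}$ from Stirling's formula applied to the displayed expression for $h_n^{\a,\b}$. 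Hence the series \eqref{ker_Jac} with $y=1$ and the formally differentiated series both converge uniformly in $x\in[-1,1]$ and locally uniformly in $t>0$, so $\frac{d}{dx}G_t^{\a,\b}(x,1)$ is obtained by differentiating under the summation sign.

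Performing the differentiation, applying \eqref{diff_Jac}, and reindexing $n\mapsto n+1$ (the $n=0$ term drops out), I obtain
$$
\frac{d}{dx}\,G_t^{\a,\b}(x,1)=\sum_{n=0}^{\infty} e^{-t(n+1)(n+\a+\b+2)}\,\frac{n+\a+\b+2}{2}\,P_n^{\a+1,\b+1}(x)\,\frac{P_{n+1}^{\a,\b}(1)}{h_{n+1}^{\a,\b}}.
$$
Writing $(n+1)(n+\a+\b+2)=n\big(n+(\a+1)+(\b+1)+1\big)+(\a+\b+2)$ extracts the factor $e^{-t(\a+\b+2)}$ and leaves precisely the exponential occurring in $G_t^{\a+1,\b+1}$. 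It then remains to verify the elementary identity
$$
\frac{n+\a+\b+2}{2}\,\frac{P_{n+1}^{\a,\b}(1)}{h_{n+1}^{\a,\b}}=2(\a+1)\,\frac{P_n^{\a+1,\b+1}(1)}{h_n^{\a+1,\b+1}},\qquad n\ge0,
$$
a short computation using \eqref{v1}, the formula for $h_n^{\a,\b}$, and the functional equation $\Gamma(z+1)=z\Gamma(z)$. Substituting it back gives exactly $2(\a+1)\,e^{-t(\a+\b+2)}\,G_t^{\a+1,\b+1}(x,1)$, which is the asserted identity.

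For the last statement, the identity just proved gives $\frac{d}{dx}G_t^{\a,\b}(x,1)=2(\a+1)e^{-t(\a+\b+2)}G_t^{\a+1,\b+1}(x,1)\ge0$, since $\a+1>0$ and the Jacobi heat kernel is nonnegative (a standard property of $G_t^{\a,\b}$; see e.g.\ \cite{NoSj}). So $x\mapsto G_t^{\a,\b}(x,1)$ is nondecreasing, and it cannot be constant on any nondegenerate subinterval: otherwise $G_t^{\a+1,\b+1}(\cdot,1)$ would vanish there, whereas — again by the super-exponential decay of its coefficients — $x\mapsto G_t^{\a+1,\b+1}(x,1)$ extends holomorphically to a complex neighborhood of $[-1,1]$ and is not identically zero, as $G_t^{\a+1,\b+1}(1,1)=\sum_{n\ge0}e^{-tn(n+\a+\b+3)}\,[P_n^{\a+1,\b+1}(1)]^2/h_n^{\a+1,\b+1}>0$. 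Therefore $x\mapsto G_t^{\a,\b}(x,1)$ is strictly increasing on $[-1,1]$, as claimed. The algebraic heart of the lemma is the one-line gamma-function identity above, and the only mildly technical ingredient is the routine uniform-convergence argument that licenses differentiation under the sum.
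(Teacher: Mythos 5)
Your proof is correct and is essentially the standard argument: the paper does not reprove this lemma but imports it from \cite[Lemma 3.3]{NSS2}, where it is obtained in the same way — term-by-term differentiation of \eqref{ker_Jac} (justified by the super-exponential decay of $e^{-tn(n+\a+\b+1)}$ against the polynomial growth of $P_n^{\a,\b}(1)$, $\|P_n^{\a,\b}\|_\infty$ and $1/h_n^{\a,\b}$), the derivative formula \eqref{diff_Jac}, a shift of index, and the Gamma-function identity matching the coefficients, which I have checked (including the $n=0$ normalization) and which holds. Your derivation of strict monotonicity from nonnegativity of the kernel plus real-analyticity in $x$ is a valid (if slightly more elaborate than necessary) way to conclude, given that strict positivity of $G_t^{\a,\b}$ for all $\a,\b>-1$ is itself available from \cite{CKP,NoSj}.
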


We now introduce an auxiliary function $H_t^{\lambda}(x)$ which can be regarded as an extension
for $-3/2<\lambda \le -1$ of (a constant times) the even part in $x$ of the Jacobi-ultraspherical heat kernel $G_t^{\lambda}(x,1)$.
Let
\begin{equation*}
H_t^{\lambda}(x) := \sum_{n=0}^{\infty} e^{-t 2n(2n+2\lambda+1)} \,\mathfrak{D}_{2n}^{\lambda} \,P_{2n}^{\lambda}(x), \qquad
	x \in [-1,1], \quad t >0, \quad -3/2 < \lambda \le -1.
\end{equation*}
Note that $H_t^{\lambda}(x)$ is well defined. Indeed, the constants $\mathfrak{D}_{2n}^{\lambda}$ are well defined by
\eqref{D}. By Stirling's formula, one has $\mathfrak{D}_{2n}^{\lambda} = \mathcal{O}(n^{\lambda+1})$ as $n \to \infty$.
Furthermore, the Jacobi polynomials satisfy (see \cite[Chapter VII, Section 7.32, Theorem 7.32.2]{Sz})
$$
\big| P_k^{\a,\b}(x)\big| \lesssim (k+1)^{\max\{\a,\b,-1/2\}}, \qquad k \ge 0, \quad x \in [-1,1],
$$
for any fixed $\a,\b \in \mathbb{R}$. Consequently, we see that the series defining $H_t^{\lambda}(x)$
converges uniformly in $(t,x) \in [T,\infty)\times [-1,1]$ for each fixed $T>0$. Using \eqref{diff_Jac} we also
see that the series can be differentiated term by term in $x$ and $t$ arbitrarily many times.
Thus $H_t^{\lambda}(x)$ is a smooth function of $(t,x) \in (0,\infty)\times [-1,1]$.

Next, we observe that $H_t^{\lambda}$ satisfies the heat equation based on the Jacobi-ultraspherical operator $J^{\lambda}$,
\begin{equation} \label{Hlheat}
\Big( \frac{d}{dt} + J^{\lambda}\Big) H_t^{\lambda}(x) = 0, \qquad x \in [-1,1], \quad t >0.
\end{equation}
This follows from the fact that each term of the defining series satisfies this equation, see \eqref{ode}.
Moreover, we have the following.
\begin{lemma} \label{lem:diff_H}
Let $-3/2 < \lambda \le -1$. Then
$$
\frac{d}{dx}\, H_t^{\lambda}(x) = 2e^{-t(2\lambda+2)}\Big[ G_t^{\lambda+1}(x,1)\Big]_{\emph{odd}}, \qquad x \in [-1,1], \quad t > 0,
$$
where the subscript `odd' indicates the odd part of the function in $x$. Similarly,
$$
\frac{d^2}{dx^2}\, H_t^{\lambda}(x) = 4(\lambda+2)e^{-t(4\lambda+6)}
	\Big[ G_t^{\lambda+2}(x,1)\Big]_{\emph{even}}, \qquad x \in [-1,1], \quad t > 0,
$$
where the subscript `even' indicates the even part of the function in $x$.
\end{lemma}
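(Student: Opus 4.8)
The plan is to differentiate the defining series of $H_t^{\lambda}$ term by term, which is legitimate by the uniform-convergence remarks preceding the lemma, and then to recognize the resulting series as (the even or odd part of) a Jacobi-ultraspherical heat kernel with shifted parameter. First I would apply \eqref{diff_Jac} in the ultraspherical case: since $P_{2n}^{\lambda} = P_{2n}^{\lambda,\lambda}$, one has $\frac{d}{dx} P_{2n}^{\lambda}(x) = \frac12(2n+2\lambda+1) P_{2n-1}^{\lambda+1,\lambda+1}(x) = \frac12(2n+2\lambda+1) P_{2n-1}^{\lambda+1}(x)$. Thus
\begin{equation*}
\frac{d}{dx} H_t^{\lambda}(x) = \sum_{n=1}^{\infty} e^{-t\,2n(2n+2\lambda+1)} \,\mathfrak{D}_{2n}^{\lambda}\, \tfrac12(2n+2\lambda+1)\, P_{2n-1}^{\lambda+1}(x).
\end{equation*}
The key algebraic step is the identity $\frac12(k+2\lambda+1)\mathfrak{D}_k^{\lambda} = 2(\lambda+2)\mathfrak{D}_{k-1}^{\lambda+1}$ (already used in the proof of Theorem~\ref{thm:DK_ext}), applied with $k=2n$; combined with $2e^{-t(2\lambda+2)} = 2e^{-t(2\lambda+2)}$ factored out, the exponent rearranges as $2n(2n+2\lambda+1) = (2n-1)(2n-1+2(\lambda+1)+1) + (2\lambda+2)$, so that
\begin{equation*}
\frac{d}{dx} H_t^{\lambda}(x) = 2e^{-t(2\lambda+2)} \sum_{n=1}^{\infty} e^{-t\,(2n-1)(2n-1+2(\lambda+1)+1)} \,\mathfrak{D}_{2n-1}^{\lambda+1}\, P_{2n-1}^{\lambda+1}(x).
\end{equation*}
The last sum is exactly $\sum_{m \text{ odd}} e^{-tm(m+2(\lambda+1)+1)} \mathfrak{D}_m^{\lambda+1} P_m^{\lambda+1}(x)$, i.e. the odd-degree part of the full expansion $\sum_{m\ge 0} e^{-tm(m+2(\lambda+1)+1)} \frac{P_m^{\lambda+1}(1)}{(\lambda+2)h_m^{\lambda+1}} P_m^{\lambda+1}(x)$. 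Using the definition of $\mathfrak{D}_m^{\lambda+1}$ and comparing with \eqref{ker_Jac} evaluated at $y=1$, this is precisely $[G_t^{\lambda+1}(x,1)]_{\mathrm{odd}}$ in $x$, since $P_m^{\lambda+1}(x)$ has the parity of $m$ in the ultraspherical case; this yields the first identity. One technical point worth a sentence is the restriction $-3/2 < \lambda \le -1$: then $\lambda+1 \in (-1/2, 0]$, so $G_t^{\lambda+1}$ is the genuine Jacobi heat kernel of \eqref{ker_Jac}, and the term-by-term manipulations are justified by the convergence estimates quoted before the lemma (now applied to parameter $\lambda+1 > -1$, for which $\mathfrak{D}_m^{\lambda+1}$ and the polynomial bounds behave well).

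For the second identity I would iterate, differentiating once more and again invoking \eqref{diff_Jac} together with the derivative rule; equivalently, differentiate $[G_t^{\lambda+1}(x,1)]_{\mathrm{odd}}$ using Lemma~\ref{lem:diff} (with $\a=\b=\lambda+1$), which gives $\frac{d}{dx} G_t^{\lambda+1}(x,1) = 2(\lambda+2) e^{-t(2\lambda+4)} G_t^{\lambda+2}(x,1)$. Differentiating the odd part produces the even part, so
\begin{equation*}
\frac{d^2}{dx^2} H_t^{\lambda}(x) = 2e^{-t(2\lambda+2)} \cdot 2(\lambda+2) e^{-t(2\lambda+4)} \big[G_t^{\lambda+2}(x,1)\big]_{\mathrm{even}} = 4(\lambda+2) e^{-t(4\lambda+6)} \big[G_t^{\lambda+2}(x,1)\big]_{\mathrm{even}},
\end{equation*}
matching the claimed exponent $4\lambda+6 = (2\lambda+2)+(2\lambda+4)$. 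I do not anticipate a serious obstacle here — the content is a careful bookkeeping of the Pochhammer/gamma constants and the heat-exponent shifts — so the main thing to get right is the constant identity $\frac12(k+2\lambda+1)\mathfrak{D}_k^{\lambda} = 2(\lambda+2)\mathfrak{D}_{k-1}^{\lambda+1}$ and the parity argument identifying the even/odd parts; one should also note that $G_t^{\lambda+2}$ in the second formula is again a bona fide Jacobi heat kernel, as $\lambda+2 \in (1/2,1]$ when $-3/2 < \lambda \le -1$, so that Lemma~\ref{lem:diff} applies verbatim.
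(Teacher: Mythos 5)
Your proof is correct and follows exactly the route the paper takes (term-by-term differentiation via \eqref{diff_Jac}, the identity $\tfrac12(k+2\lambda+1)\,\mathfrak{D}_k^{\lambda}=2(\lambda+2)\,\mathfrak{D}_{k-1}^{\lambda+1}$, parity of ultraspherical polynomials, and then Lemma~\ref{lem:diff} for the second derivative); the paper merely states this recipe and leaves the computation to the reader. One cosmetic remark: the prefactor in your intermediate display should be $2(\lambda+2)e^{-t(2\lambda+2)}$ rather than $2e^{-t(2\lambda+2)}$, but this factor of $\lambda+2$ is exactly reabsorbed when you rewrite $\mathfrak{D}_m^{\lambda+1}=P_m^{\lambda+1}(1)/\big((\lambda+2)h_m^{\lambda+1}\big)$ to identify the sum with $\big[G_t^{\lambda+1}(x,1)\big]_{\mathrm{odd}}$, so your final identities carry the correct constants.
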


\begin{proof}
To verify the first identity,
differentiate the series defining $H_t^{\lambda}(x)$ term by term and use \eqref{diff_Jac} together with the fact that
ultraspherical polynomials of odd orders are odd functions, and those of even orders are even functions.
The second identity follows from the first one and Lemma \ref{lem:diff}.
We leave the straightforward computations to the reader.
\end{proof}

The next result extends a \emph{reduction formula} for the Jacobi heat kernel that was obtained in \cite[Theorem 3.1]{NoSj}
by means of the Dijksma-Koornwinder product formula. For the sake of completeness,
the result from \cite{NoSj} is stated as item (i) of Theorem \ref{thm:red_ext}.
\begin{theorem} \label{thm:red_ext}
Let $t > 0$ and $\theta, \varphi \in [0,\pi]$. Then, with all integrations taken over $[-1,1]^2$,
\begin{itemize}
\item[(i)] If $\a,\b \ge -1/2$, then
\begin{align*}
& \frac{h_0^{\a,\b}}{h_0^{\a+\b+1/2}}  \,G_t^{\a,\b}(\cos\theta,\cos\varphi) \\
& \quad = \iint G_{t/4}^{\a+\b+1/2}\Big( u \sin\frac{\theta}2\sin\frac{\varphi}2
		+ v \cos\frac{\theta}2\cos\frac{\varphi}2, 1 \Big) \, d\Pi_{\a}(u) \, d\Pi_{\b}(v).
\end{align*}
\item[(ii)] If $-1 < \b < -1/2 \le \a$, then
\begin{align*}
& \frac{h_0^{\a,\b}}{h_0^{\a+\b+1/2}}\, G_t^{\a,\b}(\cos\theta,\cos\varphi) \\
& \quad = \iint \Big\{ -2(\a+\b+3/2) e^{-t(\a+\b+3/2)/2} \cos\frac{\theta}2\cos\frac{\varphi}2 \\
& \qquad \qquad \qquad \times	G_{t/4}^{\a+\b+3/2}\Big(u \sin\frac{\theta}2\sin\frac{\varphi}2 +
	v\cos\frac{\theta}2\cos\frac{\varphi}2,1\Big)
	\, d\Pi_{\a}(u) \, \Pi_{\b}(v)\, dv \\
& \qquad \qquad + G_{t/4}^{\a+\b+1/2}\Big(u \sin\frac{\theta}2\sin\frac{\varphi}2 + v\cos\frac{\theta}2\cos\frac{\varphi}2,1\Big)
	\, d\Pi_{\a}(u)\, d\Pi_{-1/2}(v) \Big\}.
\end{align*}
\item[(iii)] If $-1 < \a < -1/2 \le \b$, then
\begin{align*}
& \frac{h_0^{\a,\b}}{h_0^{\a+\b+1/2}}\, G_t^{\a,\b}(\cos\theta,\cos\varphi) \\
& \quad = \iint \Big\{ -2(\a+\b+3/2) e^{-t(\a+\b+3/2)/2} \sin\frac{\theta}2\sin\frac{\varphi}2 \\
& \qquad \qquad \qquad \times	G_{t/4}^{\a+\b+3/2}\Big(u \sin\frac{\theta}2\sin\frac{\varphi}2 +
	v\cos\frac{\theta}2\cos\frac{\varphi}2,1\Big)
	\, \Pi_{\a}(u)\, du \, d\Pi_{\b}(v) \\
& \qquad \qquad + G_{t/4}^{\a+\b+1/2}\Big(u \sin\frac{\theta}2\sin\frac{\varphi}2 + v\cos\frac{\theta}2\cos\frac{\varphi}2,1\Big)
	\, d\Pi_{-1/2}(u)\, d\Pi_{\b}(v) \Big\}.
\end{align*}
\item[(iv)] If $-1 < \a,\b < -1/2$ and $\a+\b > -3/2$, then
\begin{align*}
& \frac{h_0^{\a,\b}}{h_0^{\a+\b+1/2}}\, G_t^{\a,\b}(\cos\theta,\cos\varphi) \\
& \quad = \iint \Big\{ (\a+\b+3/2)(\a+\b+5/2) e^{-t(\a+\b+2)} \sin\theta\sin\varphi \\
& \qquad \qquad \qquad \times	G_{t/4}^{\a+\b+5/2}\Big(u \sin\frac{\theta}2\sin\frac{\varphi}2 +
	v\cos\frac{\theta}2\cos\frac{\varphi}2,1\Big)
 \, \Pi_{\a}(u)\, du \, \Pi_{\b}(v)\, dv \\
& \qquad \qquad -2(\a+\b+3/2) e^{-t(\a+\b+3/2)/2} \sin\frac{\theta}2\sin\frac{\varphi}2 \\
& \qquad \qquad \qquad \times	G_{t/4}^{\a+\b+3/2}\Big(u \sin\frac{\theta}2\sin\frac{\varphi}2 +
	v\cos\frac{\theta}2\cos\frac{\varphi}2,1\Big)
	\, \Pi_{\a}(u)\, du \, d\Pi_{-1/2}(v) \\
& \qquad \qquad -2(\a+\b+3/2) e^{-t(\a+\b+3/2)/2} \cos\frac{\theta}2\cos\frac{\varphi}2 \\
& \qquad \qquad \qquad \times	G_{t/4}^{\a+\b+3/2}\Big(u \sin\frac{\theta}2\sin\frac{\varphi}2 +
	v\cos\frac{\theta}2\cos\frac{\varphi}2,1\Big)
	\, d\Pi_{-1/2}(u) \, \Pi_{\b}(v)\, dv \\
& \qquad \qquad + G_{t/4}^{\a+\b+1/2}\Big(u \sin\frac{\theta}2\sin\frac{\varphi}2 + v\cos\frac{\theta}2\cos\frac{\varphi}2,1\Big)
	\, d\Pi_{-1/2}(u)\, d\Pi_{-1/2}(v) \Big\}.
\end{align*}
\item[(v)] If $-1 < \a,\b < -1/2$ and $\a+\b \le -3/2$, then
\begin{align*}
& h_0^{\a,\b} \mathfrak{C}_0^{\a,\b} G_t^{\a,\b}(\cos\theta,\cos\varphi) \\
& \quad = \iint \Big\{(\a+\b+5/2) e^{-t(\a+\b+2)} \sin\theta\sin\varphi \\
& \qquad \qquad \qquad \times	G_{t/4}^{\a+\b+5/2}\Big(u \sin\frac{\theta}2\sin\frac{\varphi}2 +
	v\cos\frac{\theta}2\cos\frac{\varphi}2,1\Big)
 \, \Pi_{\a}(u)\, du \, \Pi_{\b}(v)\, dv \\
& \qquad \qquad -2 e^{-t(\a+\b+3/2)/2} \sin\frac{\theta}2\sin\frac{\varphi}2 \\
& \qquad \qquad \qquad \times	G_{t/4}^{\a+\b+3/2}\Big(u \sin\frac{\theta}2\sin\frac{\varphi}2 +
	v\cos\frac{\theta}2\cos\frac{\varphi}2,1\Big)
	\, \Pi_{\a}(u)\, du \, d\Pi_{-1/2}(v) \\
& \qquad \qquad -2 e^{-t(\a+\b+3/2)/2} \cos\frac{\theta}2\cos\frac{\varphi}2 \\
& \qquad \qquad \qquad \times	G_{t/4}^{\a+\b+3/2}\Big(u \sin\frac{\theta}2\sin\frac{\varphi}2 +
	v\cos\frac{\theta}2\cos\frac{\varphi}2,1\Big)
	\, d\Pi_{-1/2}(u) \, \Pi_{\b}(v)\, dv \\
& \qquad \qquad + H_{t/4}^{\a+\b+1/2}\Big(u \sin\frac{\theta}2\sin\frac{\varphi}2 + v\cos\frac{\theta}2\cos\frac{\varphi}2 \Big)
	\, d\Pi_{-1/2}(u)\, d\Pi_{-1/2}(v) \Big\}.
\end{align*}
\end{itemize}
\end{theorem}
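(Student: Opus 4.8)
The plan is to establish items (ii)--(v) — item (i) being \cite[Theorem 3.1]{NoSj} — by one uniform argument: take the matching identity of Theorem \ref{thm:DK_ext} (items (ii), (iii), and item (iv) for both (iv) and (v), the final split being according to whether the ultraspherical parameter $\a+\b+1/2$ exceeds $-1$), multiply both sides by $e^{-tn(n+\a+\b+1)}$, and sum over $n\ge0$. On the left-hand side, the product $\mathfrak{C}_n^{\a,\b}h_n^{\a,\b}$ does not depend on $n$ (immediate from \eqref{C}); writing its common value as $h_0^{\a,\b}\mathfrak{C}_0^{\a,\b}$, which equals $h_0^{\a,\b}/\big((\a+\b+3/2)h_0^{\a+\b+1/2}\big)$ whenever $\a+\b+1/2>-1$, the summation yields $h_0^{\a,\b}\mathfrak{C}_0^{\a,\b}\,G_t^{\a,\b}(\cos\theta,\cos\varphi)$. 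This is precisely the left-hand side of the asserted formulas, modulo the normalizing factor $\a+\b+3/2$ carried along in items (ii)--(iv).

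On the right-hand side one first checks that the summation may be moved inside the (at most double) integral. Since $\a,\b>-1$, we have $\a+\b+1>-1$, so $n(n+\a+\b+1)>0$ for $n\ge1$ and grows quadratically; combined with $\mathfrak{D}_{2n}^{\lambda}=\mathcal{O}(n^{\lambda+1})$ and $|P_k^{\lambda}(z)|\lesssim(k+1)^{\max\{\lambda,-1/2\}}$, this gives absolute convergence of the relevant series, uniform in the integration variables, and all measures occurring are finite (the absolutely continuous ones by Lemma \ref{lem:mes}), so dominated convergence applies. It then remains to evaluate the inner sums. Put $z=u\sin\frac{\theta}2\sin\frac{\varphi}2+v\cos\frac{\theta}2\cos\frac{\varphi}2$ and recall $G_s^{\mu}(z,1)=(\mu+1)\sum_k e^{-sk(k+2\mu+1)}\mathfrak{D}_k^{\mu}P_k^{\mu}(z)$, together with the analogous expansion for $H_s^{\mu}$. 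Using the elementary identity $2n(2n+2\lambda+1)=4n(n+\a+\b+1)$ for $\lambda=\a+\b+1/2$ — and the analogous ones for $\a+\b+3/2$ and $\a+\b+5/2$, which produce the exponential prefactors $e^{-t(\a+\b+3/2)/2}$ and $e^{-t(\a+\b+2)}$ — one finds, with $s=t/4$,
$$
\sum_{n}e^{-tn(n+\a+\b+1)}\,\Phi_{2n}^{\a+\b+1/2}(\theta,\varphi,u,v)=
\begin{cases}
\dfrac{1}{\a+\b+3/2}\big[G_{t/4}^{\a+\b+1/2}(z,1)\big]_{\mathrm{even}}, & \a+\b>-3/2,\\
H_{t/4}^{\a+\b+1/2}(z), & \a+\b\le-3/2,
\end{cases}
$$
while $\sum_{n}e^{-tn(n+\a+\b+1)}\Phi_{2n-1}^{\a+\b+3/2}$ and $\sum_{n}e^{-tn(n+\a+\b+1)}\Phi_{2n-2}^{\a+\b+5/2}$ equal, respectively, $\frac{e^{-t(\a+\b+3/2)/2}}{\a+\b+5/2}\big[G_{t/4}^{\a+\b+3/2}(z,1)\big]_{\mathrm{odd}}$ and $\frac{e^{-t(\a+\b+2)}}{\a+\b+7/2}\big[G_{t/4}^{\a+\b+5/2}(z,1)\big]_{\mathrm{even}}$, the subscripts denoting even/odd part in $z$.

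Substituting these into the already-summed identities of Theorem \ref{thm:DK_ext} and collecting the resulting Gamma-factor constants produces the formulas of items (ii)--(v); along the way one notes that each even/odd qualifier may be dropped after integration against the accompanying measure, by the standard parity argument, using that $d\Pi_{-1/2}$ and $d\Pi_{\gamma}$ ($\gamma\ge-1/2$) are even and $\Pi_{\gamma}(u)\,du$ ($-1<\gamma<-1/2$) is odd under $u\mapsto-u$, while $z$ is odd under $(u,v)\mapsto(-u,-v)$. The step that will require genuine care is item (v): there $\a+\b+1/2\in(-3/2,-1]$, so the ultraspherical heat kernel $G_{t/4}^{\a+\b+1/2}$ is unavailable and must be replaced by $H_{t/4}^{\a+\b+1/2}$, which works because, as established in Section \ref{sec:red}, this function is well defined and smooth on $(0,\infty)\times[-1,1]$ and its defining series coincides termwise (after $s=t/4$) with $\sum_n e^{-tn(n+\a+\b+1)}\Phi_{2n}^{\a+\b+1/2}$; the other two ultraspherical kernels appearing in (v) cause no trouble since $\a+\b+3/2>-1/2$ and $\a+\b+5/2>1/2$. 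Apart from this, the proof reduces to bookkeeping of constants, which we omit.
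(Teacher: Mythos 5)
Your proposal is correct and follows essentially the same route as the paper: multiply the appropriate identity of Theorem \ref{thm:DK_ext} by $e^{-tn(n+\a+\b+1)}$, sum over $n$, use the exponent rewritings $tn(n+\a+\b+1)=(t/4)(2n-j)(2n-j+2(\a+\b+1/2+j)+1)+\{0,\,t(\a+\b+3/2)/2,\,t(\a+\b+2)\}$ for $j=0,1,2$ to recognize the even/odd parts of $G_{t/4}^{\a+\b+1/2+j}(\cdot,1)$ (or $H_{t/4}^{\a+\b+1/2}$ in case (v)), and discard the complementary parity parts by the symmetry of the measures. Your additional details — the $n$-independence of $\mathfrak{C}_n^{\a,\b}h_n^{\a,\b}$, the dominated-convergence justification for interchanging sum and integral, and the bookkeeping of the factor $\a+\b+3/2$ distinguishing the normalizations in (i)--(iv) versus (v) — all check out against the stated formulas.
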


\begin{proof}
To verify items (i)--(iv) above, one  starts by multiplying  the identities in (i)--(iv) of Theorem \ref{thm:DK_ext}
by the factor $\exp(-tn(n+\a+\b+1))$. The resulting equations are summed over
$n=0,1,2,\ldots$. Then one uses the simple relations
\begin{align*}
& tn(n+\a+\b+1) \\
& \quad = (t/4) 2n [2n + (\a+\b+1/2) + (\a+\b+1/2) + 1] \\
& \quad = (t/4) (2n-1) [2n-1 +(\a+\b+3/2) + (\a+\b+3/2) + 1] + t(\a+\b+3/2)/2 \\
& \quad = (t/4) (2n-2) [2n-2 + (\a+\b+5/2) + (\a+\b+5/2) + 1] + t(\a+\b+2)
\end{align*}
and the fact that Jacobi-ultraspherical polynomials of even (odd) orders are even (odd) functions.
It follows that the series obtained represents in each case
either the even or the odd part with respect to the first variable of the Jacobi-ultraspherical
heat kernel with appropriate parameter and the second variable fixed at the right endpoint.
Since, for symmetry reasons, the corresponding complementary odd/even parts give no contributions to the integrals,
we end up with the desired conclusions.

Dealing with the last term in item (v) is even more straightforward, since it requires only some of the above arguments.
\end{proof}

\section{{Preliminaries for the proof of Theorem \ref{thm:main}}} \label{sec:bnd}

\subsection*{Initial estimates}

The next result is a special case of Theorem \ref{thm:main}.
\begin{lemma} \label{lem:jac_low}
Let $\lambda > -1$. Then
$$
G_t^{\lambda}(\cos\theta,1) \simeq t^{-\lambda-1} \exp\bigg( -\frac{\theta^2}{4t}\bigg),
$$
uniformly in $0 \le \theta \le \pi/2$ and $0 < t \le 1$.
\end{lemma}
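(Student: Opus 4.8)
The plan is to treat the cases $\lambda \ge -1/2$ and $-1 < \lambda < -1/2$ separately, and in each of them to reduce to small times: for $t$ bounded below, \eqref{1.XX} gives $G_t^{\lambda}(\cos\theta,1) \simeq 1$, while $t^{-\lambda-1}\exp(-\theta^2/4t) \simeq 1$ uniformly on any set $[T_0,1]\times[0,\pi/2]$, so it suffices to argue for $0 < t \le T_0$ with a small constant $T_0$ to be fixed later. When $\lambda \ge -1/2$ the estimate is immediate from the already settled case of Theorem~\ref{thm:main}, i.e.\ \cite[Theorem~3.1]{NSS2}: taking there $\a=\b=\lambda$ and $\varphi=0$ gives $G_t^{\lambda}(\cos\theta,1) \simeq Z_t^{\lambda,\lambda}(\theta,0) = t^{-\lambda-1}\,[t+(\pi-\theta)\pi]^{-\lambda-1/2}\exp(-\theta^2/4t)$, and the middle factor is comparable to $1$ for $\theta\in[0,\pi/2]$, $0<t\le1$.

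For $-1<\lambda<-1/2$ the idea is to raise the parameter by one into the range just handled and then integrate back. Set $\mu:=\lambda+1\in(0,1/2)$. By Lemma~\ref{lem:diff}, $\frac{d}{dx}G_t^{\lambda}(x,1)=2\mu\,e^{-2\mu t}\,G_t^{\mu}(x,1)$; integrating in $x$ from $-1$ to $\cos\theta$ and substituting $x=\cos\phi$ yields
$$
G_t^{\lambda}(\cos\theta,1)=G_t^{\lambda}(-1,1)+2\mu\,e^{-2\mu t}\int_{\theta}^{\pi}G_t^{\mu}(\cos\phi,1)\,\sin\phi\,d\phi,\qquad\theta\in[0,\pi].
$$
Since $\mu\ge-1/2$, the first part of the proof applies to $G_t^{\mu}(\cos\phi,1)$ for every $\phi\in[0,\pi]$ and gives $G_t^{\mu}(\cos\phi,1)\simeq Z_t^{\mu}(\phi,0)>0$, so the integral above is comparable to $\int_{\theta}^{\pi}Z_t^{\mu}(\phi,0)\sin\phi\,d\phi$ and the problem reduces to estimating this quantity.

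I would estimate $\int_{\theta}^{\pi}Z_t^{\mu}(\phi,0)\sin\phi\,d\phi$ by splitting at $\pi/2$. On $[\theta,\pi/2]$ one has $t+(\pi-\phi)\pi\simeq1$ and $\sin\phi\simeq\phi$, so that part equals, up to constants, $t^{-\mu-1}\int_{\theta}^{\pi/2}\phi\,e^{-\phi^2/4t}d\phi\simeq t^{-\mu}\big(e^{-\theta^2/4t}-e^{-\pi^2/16t}\big)$. On $[\pi/2,\pi]$ a Laplace-type computation shows the integral is governed by $\phi$ near $\pi/2$, where all polynomial factors are $\simeq1$, and equals $\simeq t^{-\mu}e^{-\pi^2/16t}$. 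Adding the two contributions and using $e^{-\pi^2/16t}\le e^{-\theta^2/4t}$ for $\theta\le\pi/2$ yields $\int_{\theta}^{\pi}Z_t^{\mu}(\phi,0)\sin\phi\,d\phi\simeq t^{-\mu}e^{-\theta^2/4t}=t^{-\lambda-1}e^{-\theta^2/4t}$, uniformly in $\theta\in[0,\pi/2]$ and $0<t\le T_0$. It then remains to absorb the boundary term $G_t^{\lambda}(-1,1)$: here I would invoke the qualitatively sharp short-time upper bound of \cite{CKP}, valid for all parameters above $-1$, which gives $|G_t^{\lambda}(-1,1)|\lesssim t^{-N}e^{-c\pi^2/t}$ for some $c>1/16$ (the admissible exponent being close to the optimal value $1/4$); alternatively a direct off-diagonal (Davies–Gaffney type) estimate for the Jacobi semigroup suffices. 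Consequently $|G_t^{\lambda}(-1,1)|=o\big(t^{-\lambda-1}e^{-\theta^2/4t}\big)$ as $t\to0^+$, uniformly in $\theta\in[0,\pi/2]$, so after shrinking $T_0$ this term is dominated by the main term from both sides, which completes the proof.

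The main obstacle is the regime where $\theta$ is close to $\pi/2$, in which the asserted kernel value is exponentially small, of order $t^{-\lambda-1}e^{-\pi^2/16t}$: one must extract this exact order from the $[\pi/2,\pi]$ part of the integral (the $[\theta,\pi/2]$ part alone produces only $e^{-\theta^2/4t}-e^{-\pi^2/16t}$, which is far too small near $\theta=\pi/2$), and simultaneously one must know that the leftover $G_t^{\lambda}(-1,1)$ decays strictly faster than $e^{-\pi^2/16t}$. For $\theta$ near $0$, by contrast, the analysis is the routine Gaussian one. It is precisely this cancellation near $\theta=\pi/2$ that makes the a priori qualitative bound of \cite{CKP} the natural ingredient, rather than relying only on the two-sided $\simeq$ estimate for $G_t^{\mu}(\cdot,1)$.
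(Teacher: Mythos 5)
Your reduction for $\lambda\ge-1/2$ and the splitting $G_t^{\lambda}(\cos\theta,1)=G_t^{\lambda}(-1,1)+2(\lambda+1)e^{-t(2\lambda+2)}\int_{\theta}^{\pi}G_t^{\lambda+1}(\cos\phi,1)\sin\phi\,d\phi$ are fine, and your evaluation of the integral (including the delicate $[\pi/2,\pi]$ piece contributing exactly $\simeq t^{-\lambda-1}e^{-\pi^2/(16t)}$) is correct. The gap is the boundary term. For the upper bound you must show $G_t^{\lambda}(-1,1)\lesssim t^{-\lambda-1}e^{-\pi^2/(16t)}$, and for this you invoke a qualitative short-time upper bound of the form $t^{-N}e^{-c\pi^2/t}$ with $c>1/16$, asserting that such a $c$ "close to the optimal value $1/4$" is available from \cite{CKP}. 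That is precisely the step you have not established: the qualitatively sharp bounds in the range $\lambda<-1/2$ come from general Dirichlet-space machinery (doubling plus Poincar\'e), which a priori yields an upper Gaussian bound with \emph{some} unspecified constant $c>0$ in the exponent; nothing in the statement guarantees $c>1/16$. Upgrading the constant to anything below $1/4$ requires a separate Davies--Gaffney/integrated-maximum-principle argument for this degenerate operator, which you gesture at but do not carry out. Since the entire difficulty of the lemma for $\theta$ near $\pi/2$ is concentrated in this one estimate (the main term there is exactly of size $t^{-\lambda-1}e^{-\pi^2/(16t)}$, so there is no room to spare), the proof is incomplete as written. Note also that the lower bound needs $G_t^{\lambda}(-1,1)\ge 0$ (or two-sided smallness), which you should state explicitly.

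For comparison, the paper avoids any a priori Gaussian decay at the antipodal pair by anchoring at $x=0$ rather than $x=-1$. Since $\a=\b$, the first-order term of $J^{\lambda}$ vanishes at $x=0$, so the heat equation gives $\partial_t G_t^{\lambda}(0,1)=\frac{d^2}{dx^2}G_t^{\lambda}(x,1)\big|_{x=0}\simeq G_t^{\lambda+2}(0,1)\simeq t^{-\lambda-3}e^{-\pi^2/(16t)}>0$ by Lemma \ref{lem:diff} and the known case; monotonicity in $t$ then makes $\eta=\lim_{t\to0^+}G_t^{\lambda}(0,1)$ exist, a semigroup-convergence argument forces $\eta=0$, and integrating $\partial_t G_t^{\lambda}(0,1)$ from $0$ to $t$ yields $G_t^{\lambda}(0,1)\simeq t^{-\lambda-1}e^{-\pi^2/(16t)}$ exactly. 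After that the integration in $x$ only runs over $[0,\cos\theta]$, i.e.\ over $\varphi\in[\theta,\pi/2]$, where your own computation applies. If you want to keep your structure, you would either need to supply a genuine proof of the antipodal upper bound with exponential constant exceeding $1/16$, or replace the anchor at $x=-1$ by the paper's heat-equation argument at $x=0$.
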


\begin{remark} \label{rem:s}
This estimate is new for $\lambda < -1/2$, but for $\lambda \ge -1/2$ it is contained in the known part of Theorem \ref{thm:main}.
\end{remark}

\begin{proof}[{Proof of Lemma \ref{lem:jac_low}}]
We first verify that
$\lim_{t\to 0^+} G_t^{\lambda}(0,1) = 0$.
The heat equation yields            
\begin{equation*}
 \frac \partial {\partial t} \, G_t^{\lambda}(0,1)  = - J^\lambda\, G_t^{\lambda}(x,1)\Big|_{x=0} 
	=  \frac {d^2} {d x^2}\, G_t^{\lambda}(x,1)\Big|_{x=0}.
\end{equation*}
To estimate this second derivative, we apply Lemma \ref{lem:diff} twice and then use Remark \ref{rem:s}, getting
\begin{equation} \label{dGdt}
  \frac \partial {\partial t} \, G_t^{\lambda}(0,1)
\simeq G_t^{\lambda+2}(0,1) \simeq  t^{-\lambda-3}\,\exp\bigg(-\frac{\pi^2}{16t} \bigg) >0,  \qquad 0<t\le 1.
\end{equation}
Thus $G_t^{\lambda}(0,1)$ is a non-negative, increasing function of $t$ in $(0,1]$, and so the limit
$$
\eta := \lim_{t\to 0^+} G_t^{\lambda}(0,1)
$$
exists and is non-negative.

Aiming at a contradiction, we assume that $\eta>0$. Since $ G_t^{\lambda}(x,1)$ is increasing in $x$ by Lemma~\ref{lem:diff},
we then have $G_t^{\lambda}(x,1) \ge G_t^{\lambda}(0,1) > \eta$ for $0 \le x \le 1$ and $0< t \le 1$,
and by symmetry $G_t^{\lambda}(1,y) > \eta$ for $0 \le y \le 1$. Now let $\big(T_t^{\lambda}\big)$
be the Jacobi (ultraspherical) semigroup
and take a nontrivial continuous function $f \ge 0$ supported in a compact subinterval of $(0,1)$. Then we get
\begin{equation*}
 T^{\lambda}_tf(1) = \int_{-1}^{1}G_t^{\lambda}(1,y)f(y)\,d\varrho_{\lambda}(y) > \eta\,\|f\|_{L^1(d\varrho_{\lambda})} > 0
\end{equation*}
for $0<t\le 1$. But $\big(T^{\lambda}_t\big)$ is a symmetric diffusion semigroup in the sense of
\cite[Chapter~3]{topics} and its maximal operator is $L^{\infty}$-bounded.
By standard arguments, see e.g.\ \cite[Chapter~2, Section~2]{Duo}, it follows that
$T^{\lambda}_t f$ converges to $f$ as $t \to 0^+$, uniformly in $[-1,1]$, since this convergence certainly holds for the
dense subset of $C([-1,1])$ consisting of finite linear combinations of Jacobi-ultraspherical polynomials.
This is a contradiction which proves that $\eta = 0$.

Then \eqref{dGdt} and Remark \ref{rem:s} imply for $0<t \le 1$
\begin{align} \label{G01}
 G_t^{\lambda}(0,1) & \simeq
 \int_{0}^{t} \tau^{-\lambda-3} \,\exp\bigg(-\frac{\pi^2}{16\tau} \bigg) \,d\tau \simeq
 \int_{1/t}^{\infty}  \sigma^{\lambda+1} \,\exp\bigg(-\frac{\pi^2}{16}\,\sigma \bigg) \,d\sigma \notag \\ &\simeq
 t^{-\lambda-1}\,\exp\bigg(-\frac{\pi^2}{16t} \bigg),
\end{align}
where we  made the change of variable $\sigma = 1/\tau$.

For $x = \cos \theta \in (0,1]$, we apply first Lemma \ref{lem:diff} and then Remark \ref{rem:s}, to get
\begin{align*}
 G_t^{\lambda}(x,1) - G_t^{\lambda}(0,1) & =  \int_{0}^{x} \frac {d} {dy}\, G_t^{\lambda}(y,1)\,dy \simeq
     \int_{0}^{x}   G_t^{\lambda+1}(y,1)\,dy  \\
& \quad = \int_{\theta}^{\pi/2}  G_t^{\lambda+1}(\cos\varphi,1)\,\sin\varphi\,d\varphi \simeq
  t^{-\lambda-2}\,   \int_{\theta}^{\pi/2}  \,\exp\bigg(-\frac{\varphi^2}{4t} \bigg)\,\varphi\,d\varphi\\
& \quad = 2 \, t^{-\lambda-1}\,\bigg[\exp\bigg(-\frac{\theta^2}{4t} \bigg) - \exp\bigg(-\frac{\pi^2}{16t} \bigg)\bigg].
\end{align*}
From this and \eqref{G01}, we can finish the proof by separating the case $((\pi/2)^2-\theta^2)/t > 1$ and the contrary case.
\end{proof}

\begin{lemma}\label{Htl}
Let $\lambda \in (-3/2,-1]$. Then
$$
 H_{t}^{\lambda}(\cos\theta) \simeq  t^{-\lambda -1}\,\exp\bigg( -\frac{\theta^2}{4t} \bigg),
$$
uniformly in $0\le \theta \le \pi/2$ and $0<t \le 1$.
\end{lemma}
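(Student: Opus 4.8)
The plan is to follow the pattern of the proof of Lemma~\ref{lem:jac_low}, replacing the tools that break down when $\lambda\le -1$. Write $x=\cos\theta$. I would first analyze $H_t^\lambda$ at the symmetric point $x=0$: since for ultraspherical parameters $J^\lambda$ reduces at $x=0$ to $-\,d^2/dx^2$, the heat equation \eqref{Hlheat} gives $\partial_t H_t^\lambda(0)=\frac{d^2}{dx^2}H_t^\lambda(x)\big|_{x=0}$, and Lemma~\ref{lem:diff_H} together with Lemma~\ref{lem:jac_low} (applicable because $\lambda+2>-1$) yields
$$
\partial_t H_t^\lambda(0) = 4(\lambda+2)\,e^{-t(4\lambda+6)}\,G_t^{\lambda+2}(0,1) \simeq t^{-\lambda-3}\exp\Big(-\tfrac{\pi^2}{16t}\Big)>0,\qquad 0<t\le 1.
$$
Hence $H_t^\lambda(0)$ is increasing in $t$, the integral $\int_0^t\tau^{-\lambda-3}e^{-\pi^2/(16\tau)}\,d\tau$ converges, so $\eta:=\lim_{t\to 0^+}H_t^\lambda(0)$ exists and is finite, and — by the change of variable $\sigma=1/\tau$ exactly as in the step leading to \eqref{G01} — $H_t^\lambda(0)-\eta\simeq t^{-\lambda-1}e^{-\pi^2/(16t)}$ for $0<t\le 1$.

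The heart of the matter is to prove $\eta=0$ (the conclusion of the lemma forces this), and this is the step I expect to be the main obstacle: the maximal-operator argument for the diffusion semigroup used in Lemma~\ref{lem:jac_low} is unavailable here because $d\varrho_\lambda$ is not even locally finite for $\lambda\le -1$, so one must instead identify the (heat-type) limit of the genuinely oscillatory series $H_t^\lambda(0)$ directly. For this I would use an explicit generating function: starting from $\sum_k C_k^\mu(x)r^k=(1-2xr+r^2)^{-\mu}$, the link between $C_k^\mu$ and $P_k^\lambda$, and the definition of $\mathfrak{D}_k^\lambda$, one computes $\sum_{k\ge 0}\mathfrak{D}_k^\lambda P_k^\lambda(x)r^k=\mathfrak{D}_0^\lambda\,(1-r^2)(1-2xr+r^2)^{-\lambda-3/2}$ for $|r|<1$, first for $\lambda>-1/2$ and then, by analytic continuation in $\lambda$ as in Section~\ref{sec:DK}, for $\lambda\in(-3/2,-1]$. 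Taking the part even in $k$ and setting $x=0$ collapses the right-hand side to $\mathfrak{D}_0^\lambda(1-r^2)(1+r^2)^{-\lambda-3/2}$, which tends to $0$ as $r\to 1^-$ since $\lambda+3/2>0$. An elementary computation (using the value $C_{2n}^{\lambda+1/2}(0)=(-1)^n(\lambda+1/2)_n/n!$) shows that $\mathfrak{D}_{2n}^\lambda P_{2n}^\lambda(0)=(-1)^n|\mathfrak{D}_{2n}^\lambda P_{2n}^\lambda(0)|$ with the moduli strictly decreasing to $0$; hence the series $\sum_n\mathfrak{D}_{2n}^\lambda P_{2n}^\lambda(0)$ converges, and by Abel's theorem it equals $0$. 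Finally, the Abel summation $H_t^\lambda(0)=\sum_n C_n\big(e^{-ta_n}-e^{-ta_{n+1}}\big)$ with $a_n=2n(2n+2\lambda+1)$ strictly increasing, $a_0=0$, and $C_n=\sum_{j\le n}\mathfrak{D}_{2j}^\lambda P_{2j}^\lambda(0)\to 0$, together with the facts that the nonnegative weights $e^{-ta_n}-e^{-ta_{n+1}}$ sum to $1$ and each tends to $0$ as $t\to 0^+$, gives $\eta=0$. Consequently $H_t^\lambda(0)\simeq t^{-\lambda-1}e^{-\pi^2/(16t)}$, which is the lemma for $\theta=\pi/2$.

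To pass from $\theta=\pi/2$ to all $\theta\in[0,\pi/2]$, I would use the second identity of Lemma~\ref{lem:diff_H} and $H_t^{\lambda\prime}(0)=0$ ($H_t^\lambda$ is even): Taylor's formula with integral remainder gives, for $x=\cos\theta\in[0,1]$,
$$
H_t^\lambda(\cos\theta) = H_t^\lambda(0) + 4(\lambda+2)\,e^{-t(4\lambda+6)}\int_0^{\cos\theta}(\cos\theta-s)\,\big[G_t^{\lambda+2}(s,1)\big]_{\mathrm{even}}\,ds.
$$
The integrand is nonnegative ($\lambda+2>0$ and $G_t^{\lambda+2}(\cdot,1)>0$), so $H_t^\lambda$ is convex and increasing on $[0,1]$ and in particular $H_t^\lambda(\cos\theta)\ge H_t^\lambda(0)\gtrsim t^{-\lambda-1}e^{-\pi^2/(16t)}$. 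I would then write $\big[G_t^{\lambda+2}(s,1)\big]_{\mathrm{even}}=\tfrac12 G_t^{\lambda+2}(s,1)+\tfrac12 G_t^{\lambda+2}(-s,1)$, bound the first summand by Lemma~\ref{lem:jac_low} and the second (a reflected contribution, coming from angles in $[\pi/2,\pi]$) by the known part of Theorem~\ref{thm:main} together with Lemma~\ref{lem:diff}'s monotonicity; using $\cos\theta-\cos\psi\simeq\psi^2-\theta^2$ for $0\le\theta\le\psi\le\pi/2$ and elementary Gaussian integrals, one checks that the reflected part is always dominated by the main one, and that the main one is $\simeq t^{-\lambda-1}e^{-\theta^2/(4t)}$ whenever $((\pi/2)^2-\theta^2)/t$ exceeds a fixed constant. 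Separating this range from its complement — where $e^{-\theta^2/(4t)}\simeq e^{-\pi^2/(16t)}$ and the two-sided bounds $H_t^\lambda(0)\le H_t^\lambda(\cos\theta)\le H_t^\lambda(0)+4(\lambda+2)e^{-t(4\lambda+6)}\int_0^{\cos\theta}(\cos\theta-s)G_t^{\lambda+2}(s,1)\,ds$ already pin down the order — exactly as at the end of the proof of Lemma~\ref{lem:jac_low}, yields $H_t^\lambda(\cos\theta)\simeq t^{-\lambda-1}e^{-\theta^2/(4t)}$ and completes the proof.
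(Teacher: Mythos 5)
Your proof is correct, and for the crucial step it takes a genuinely different route from the paper. The two arguments agree on the outer structure: both use \eqref{Hlheat} and Lemma \ref{lem:diff_H} to get $\partial_t H_t^{\lambda}(0)\simeq G_t^{\lambda+2}(0,1)\simeq t^{-\lambda-3}e^{-\pi^2/(16t)}$, integrate in $t$ to pin down $H_t^{\lambda}(0)$ once the limit at $t=0^+$ is known, and then pass to general $\theta$ by integrating derivatives of $H_t^{\lambda}$ twice (your Taylor remainder $\int_0^x(x-s)H_t^{\lambda\prime\prime}(s)\,ds$ is, after Fubini and the monotonicity of $G_t^{\lambda+2}(\cdot,1)$, exactly the paper's double integral \eqref{double}, and the final case split on $((\pi/2)^2-\theta^2)/t$ is identical). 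The divergence is in proving $\eta:=\lim_{t\to0^+}H_t^{\lambda}(0)=0$, which is indeed the only step where the $\lambda\le-1$ degeneracy bites. The paper gets it almost for free from Theorem \ref{thm:red_ext}\,(v): choosing $\a=\b=\lambda/2-1/4$, $\theta=\pi$, $\varphi=0$ kills the first three integrals, identifies the left-hand side with a constant times $G_t^{\a}(-1,1)\le G_t^{\a}(0,1)\to0$ (Lemmas \ref{lem:diff} and \ref{lem:jac_low}), and leaves exactly $H_{t/4}^{\lambda}(0)$ on the right. You instead prove it from scratch via the generating function $\sum_k\mathfrak{D}_k^{\lambda}P_k^{\lambda}(x)r^k=\mathfrak{D}_0^{\lambda}(1-r^2)(1-2xr+r^2)^{-\lambda-3/2}$ (which checks out: $\mathfrak{D}_k^{\lambda}P_k^{\lambda}=\mathfrak{D}_0^{\lambda}\frac{k+\mu}{\mu}C_k^{\mu}$ with $\mu=\lambda+1/2$, by the duplication formula), the sign and monotonicity analysis of $\mathfrak{D}_{2n}^{\lambda}P_{2n}^{\lambda}(0)=\mathfrak{D}_0^{\lambda}\frac{2n+\lambda+1/2}{\lambda+1/2}(-1)^n\frac{(\lambda+1/2)_n}{n!}$ (the alternation and the eventual decrease of the moduli do hold for $\lambda\in(-3/2,-1]$; "strictly decreasing" from $n=1$ on is what one actually verifies, and it suffices), Abel's theorem, and a regular-summability argument for the weights $e^{-ta_n}-e^{-ta_{n+1}}$. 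This is more computational but entirely self-contained: it does not invoke the reduction formula at all, so Lemma \ref{Htl} becomes independent of Section \ref{sec:red}, whereas the paper's proof is shorter because it recycles machinery already built for the main theorem. Both are valid; yours buys independence at the cost of an extra classical identity and a Tauberian-flavoured limit argument.
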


\begin{proof}
We  first claim that
 $$
H_{t}^{\lambda}(0) \to 0 \qquad \mathrm{as} \qquad t \to 0^+.
$$
In the  formula of Theorem \ref{thm:red_ext}\,(v), we take $\a=\b=\lambda/2-1/4>-1$, $\varphi = 0$ and  $\theta =\pi$.
Then the fourth integral in the right-hand side of this formula is precisely $H_{t/4}^{\lambda}(0)$.
Moreover, the first three integrals in the right-hand side all vanish because of the trigonometric factors.
In the left-hand side, we have constant times $G_{t}^{\a}(-1,1)$, and this quantity is non-negative and,
in view of Lemma \ref{lem:diff}, no larger than $G_{t}^{\a}(0,1)$. Lemma \ref{lem:jac_low} shows that  $G_{t}^{\a}(0,1) \to 0$ as
$t \to 0^+$, and the claim follows.

We will next estimate  $H_{t}^{\lambda}(x)$ for $0 \le x \le 1$ and $0<t \le 1$, by integrating its derivatives.
Letting first  $x = 0$, we conclude
from \eqref{Hlheat} and Lemma \ref{lem:diff_H} that
\begin{equation*}
\frac \partial {\partial t} \, H_{t}^{\lambda}(0) = - J^\lambda H_{t}^{\lambda}(x)\Big|_{x=0} =
	\frac{d^2}{d x^2}\, H_{t}^{\lambda}(x)\Big|_{x=0} \simeq G_t^{\lambda+2}(0,1) \simeq t^{-\lambda -3} \,
		\exp\bigg( -\frac{\pi^2}{16t}\bigg),
\end{equation*}
the last relation by Lemma \ref{lem:jac_low}. Integrating as in \eqref{G01}, we get for $0<t \le 1$
\begin{equation} \label{H0}
  H_{t}^{\lambda}(0) \simeq t^{-\lambda -1}\,\exp\bigg( -\frac{\pi^2}{16t}\bigg).
\end{equation}

Let now $0 \le x \le 1$ and $0<t \le 1$. We write, using Lemma \ref{lem:diff_H} and then Lemma \ref{lem:diff},
\begin{align*} 
H_{t}^{\lambda}(x) & =  H_{t}^{\lambda}(0) + \int_{0}^{x} \frac{d}{dy}\,H_{t}^{\lambda}(y)\,dy
= H_{t}^{\lambda}(0) + e^{-t(2\lambda+2)}\, \int_{0}^{x} \Big[G_t^{\lambda+1}(y,1) - G_t^{\lambda+1}(-y,1)\Big]\,dy \notag \\
& \simeq  H_{t}^{\lambda}(0) +  \int_{0}^{x} \, \int_{-y}^{y}G_t^{\lambda+2}(z,1)\,dz\,dy
\simeq   H_{t}^{\lambda}(0) +  \int_{0}^{x} \, \int_{0}^{y}G_t^{\lambda+2}(z,1)\,dz\,dy,
\end{align*}
the last step by the monotonicity of $G_t^{\lambda+2}(\cdot,1)$. Now let $x= \cos \theta, \; y = \cos \varphi$ and $z= \cos \kappa$. 
From Lemma \ref{lem:jac_low}, we see that the last double integral is of order of magnitude
\begin{align} \label{double}
& \int^{\pi/2}_{\theta} \int^{\pi/2}_{\varphi} G_t^{\lambda+2}(\cos\kappa,1)\sin\kappa\, d\kappa\, \sin\varphi\, d\varphi
	\simeq t^{-\lambda -3}\, \int^{\pi/2}_{\theta} \int^{\pi/2}_{\varphi} \exp\bigg( -\frac{\kappa^2}{4t}\bigg)
		\kappa\, d\kappa \,\varphi\, d\varphi \\
& \qquad = 2\, t^{-\lambda -2}\, \int^{\pi/2}_{\theta}
 \bigg[\exp\bigg( -\frac{\varphi^2}{4t} \bigg) - \exp\bigg( -\frac{\pi^2}{16t} \bigg)\bigg] \,\varphi\, d\varphi  \notag \\
& \qquad = 4\, t^{-\lambda -1}\, \bigg[\exp\bigg(-\frac{\theta^2}{4t}\bigg) - \exp\bigg( -\frac{\pi^2}{16t}\bigg)\bigg]     
  - t^{-\lambda -2}\,\bigg[\Big(\frac{\pi}2\Big)^2 -\theta^2\bigg]\,\exp\bigg( -\frac{\pi^2}{16t}\bigg) \notag \\
& \qquad = 4\, t^{-\lambda -1}\, \exp\bigg( -\frac{\theta^2}{4t} \bigg)\bigg[1 - \bigg(1 +
	\frac{(\pi/2)^2 -\theta^2 }{4t} \bigg)\, \exp\bigg( -\frac{(\pi/2)^2 -\theta^2 }{4t} \bigg)\bigg]. \notag
\end{align}

Now if ${((\pi/2)^2 -\theta^2) }/{4t} > C$ for some suitably large constant $C$,
the product inside the square bracket in the last expression will be small,
and then the whole expression is of order of magnitude $t^{-\lambda-1}\exp( -\theta^2/{4t})$.
Together with  \eqref{H0}, this implies the conclusion of the lemma.
And in the contrary case ${((\pi/2)^2 -\theta^2) }/{4t} \le C$, one has
$t^{-\lambda-1}\exp( -\theta^2/{4t}) \simeq t^{-\lambda-1}\exp( -{\pi^2}/{16t}) \simeq H_{t}^{\lambda}(0)$ because of \eqref{H0}.
Since the expressions in \eqref{double} are non-negative and no larger than a constant times $t^{-\lambda-1}\exp( -\theta^2/{4t})$,
the conclusion follows from  \eqref{H0}.
\end{proof}

\subsection*{Further preparations}

Let $0 \le \theta,\varphi \le \pi$. For $0 \le u,v \le 1$ we define
$$
 F(u,v) := \bigg[\arccos\bigg(u\sin\frac{\theta}2\sin\frac{\varphi}2 + v\cos\frac{\theta}2\cos\frac{\varphi}2\bigg)\bigg]^2.
$$
Then $0 \le F(1,1) = (\theta - \varphi)^2/4 \le F(u,v) \le \pi^2/4 = F(0,0)$.
\begin{lemma} \label{lem:exp}
There exist absolute constants $c_1 > c_2 > 0$ such that the bounds
\begin{align*}
& \exp\bigg(-c_1\,\frac{(1-u) \theta\varphi}t\bigg)
\exp\bigg(-c_1\,\frac{ (1-v)(\pi - \theta)(\pi - \varphi)}t \bigg)\\ & \qquad \qquad \le 
\exp\bigg(-\frac{ F(u,v)}t\bigg)\, \exp\bigg(\frac{ F(1,1)}t\bigg)  \\ & \qquad \qquad \qquad \qquad \le
\exp\bigg(-c_2\,\frac{(1-u) \theta\varphi}t\bigg)
\exp\bigg(-c_2\,\frac{ (1-v)(\pi - \theta)(\pi - \varphi)}t\bigg)
\end{align*}
hold for $\theta,\varphi \in [0,\pi]$, $u,v \in [0,1]$ and $t > 0$.
\end{lemma}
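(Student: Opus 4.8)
\section*{Proof proposal for Lemma \ref{lem:exp}}

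The plan is to reduce the two claimed exponential inequalities to a single two–sided polynomial comparison. Taking logarithms and multiplying by $-t$ (legitimate since $t>0$), and observing that
\[
\exp\bigg(-\frac{F(u,v)}t\bigg)\exp\bigg(\frac{F(1,1)}t\bigg)=\exp\bigg(-\frac{F(u,v)-F(1,1)}t\bigg),
\]
one sees that the lemma is equivalent to the estimate
\[
F(u,v)-F(1,1)\simeq (1-u)\,\theta\varphi+(1-v)(\pi-\theta)(\pi-\varphi),
\]
with absolute implicit constants, uniformly in $\theta,\varphi\in[0,\pi]$ and $u,v\in[0,1]$; moreover any such estimate with constants $c_2$ (lower) and $c_1$ (upper) yields the lemma with the very same $c_1,c_2$, since $\exp(-x/t)$ is decreasing in $x$ and $\exp$ of a sum factors into a product. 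So the whole task is this comparison, and $t$ plays no role.

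To prove the comparison I would abbreviate $a=\sin\frac\theta2\sin\frac\varphi2\ge 0$ and $b=\cos\frac\theta2\cos\frac\varphi2\ge 0$, so that $a+b=\cos\frac{\theta-\varphi}2\le 1$ (consistent with $F(1,1)=(\theta-\varphi)^2/4$), and write $g(s)=(\arccos s)^2$, whence $-g'(s)=2\arccos(s)/\sqrt{1-s^2}$. The key elementary fact is that for $s\in[0,1]$ one has $-g'(s)=2\psi/\sin\psi$ with $\psi=\arccos s\in[0,\pi/2]$, and $\psi\mapsto\psi/\sin\psi$ increases from $1$ to $\pi/2$ on that interval; hence $2\le -g'(s)\le\pi$ for every $s\in[0,1]$. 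Then I split
\[
F(u,v)-F(1,1)=g(ua+vb)-g(a+b)=\big[g(ua+vb)-g(ua+b)\big]+\big[g(ua+b)-g(a+b)\big],
\]
and since $0\le ua+vb\le ua+b\le a+b\le 1$, each bracket is a nonnegative integral of $-g'$ over an interval of length $(1-v)b$ and $(1-u)a$ respectively, hence comparable to that length with the absolute constants $2$ and $\pi$. This gives $F(u,v)-F(1,1)\simeq (1-u)a+(1-v)b$. Finally, applying the standard bounds $\frac{2x}\pi\le\sin x\le x$ on $[0,\pi/2]$ to $\frac\theta2,\frac\varphi2$ and, after writing $\cos\frac\theta2=\sin\frac{\pi-\theta}2$, to $\frac{\pi-\theta}2,\frac{\pi-\varphi}2$, one gets $a\simeq\theta\varphi$ and $b\simeq(\pi-\theta)(\pi-\varphi)$, which completes the comparison; one may even read off explicit admissible values such as $c_1=\pi/4$ and $c_2=2/\pi^2$ (so $c_1>c_2>0$).

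There is no genuine obstacle; the only points needing a little care are checking that $ua+vb$, $ua+b$, $a+b$ all lie in $[0,1]$ so that the bound $2\le -g'\le\pi$ applies on the whole range of integration (this uses $u,v,a,b\ge 0$ and $a+b\le 1$), and the degenerate configurations: when $\theta=\varphi$ one has $a+b=1$ and $-g'$ is taken as its limit value $2$ at $s=1$, which is harmless; when $\theta$ or $\varphi$ lies in $\{0,\pi\}$ one of $a,b$ vanishes together with the matching term on the right-hand side, so nothing is to be proved for that summand.
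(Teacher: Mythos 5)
Your proof is correct and follows essentially the same route as the paper: the paper differentiates the relation $\cos\sqrt{F(u,v)}=u\sin\frac{\theta}{2}\sin\frac{\varphi}{2}+v\cos\frac{\theta}{2}\cos\frac{\varphi}{2}$ in $u$ and $v$, uses $\sqrt{F}/\sin\sqrt{F}\in[1,\pi/2]$ to get $-\partial_uF\simeq\theta\varphi$ and $-\partial_vF\simeq(\pi-\theta)(\pi-\varphi)$, and integrates to obtain $F(u,v)-F(1,1)\simeq(1-u)\theta\varphi+(1-v)(\pi-\theta)(\pi-\varphi)$, which is exactly your one-variable computation with $g(s)=(\arccos s)^2$ and the split through the intermediate point $ua+b$. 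Your version is just a slightly more explicit write-up (with the admissible constants $c_1=\pi/4$, $c_2=2/\pi^2$ made concrete), and it is fine.
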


\begin{proof}
We have $\cos \sqrt{F(u,v)} = u\sin\frac{\theta}2\sin\frac{\varphi}2 + v\cos\frac{\theta}2\cos\frac{\varphi}2$.
Differentiating this equation with respect to $u$ and $v$, we get
\begin{align*}
 -\partial_u F(u,v) & = 2\, \frac{\sqrt{F(u,v)}}{\sin \sqrt{F(u,v)}}\,\sin\frac{\theta}2 \sin\frac{\varphi}2
\simeq \sin\frac{\theta}2\sin\frac{\varphi}2 \simeq \theta\varphi; \\
 -\partial_v F(u,v) & = 2\, \frac{\sqrt{F(u,v)}}{\sin \sqrt{F(u,v)}}\,\cos\frac{\theta}2 \cos\frac{\varphi}2
\simeq \cos\frac{\theta}2\cos\frac{\varphi}2 \simeq (\pi - \theta)(\pi - \varphi).
\end{align*}
Thus for any  $0 \le u, v \le 1$ and $0 \le \theta,\varphi \le \pi$
\begin{equation*}
 F(u,v) - F(1,1) \simeq (1-u) \theta\varphi + (1-v)(\pi - \theta)(\pi - \varphi).
\end{equation*}
Dividing by $t$ and exponentiating, we obtain the desired conclusion.
\end{proof}

We will also need the following.
\begin{lemma}\label{ssigma}
 Let $\gamma \ge -1/2$ be fixed. Then uniformly in $\xi \ge 0$
\begin{equation*}
	\int_{[0,1]} \exp\big(-\xi (1-s) \big) \, d\Pi_{\gamma}(s) \simeq \frac1{(1+\xi)^{\gamma+1/2}}.
\end{equation*}
\end{lemma}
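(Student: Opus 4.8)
The plan is to separate the boundary case $\gamma=-1/2$ from the range $\gamma>-1/2$, and in the latter case to reduce the integral to a standard incomplete-gamma estimate. When $\gamma=-1/2$ the restriction of $d\Pi_{-1/2}$ to $[0,1]$ is $\frac12\delta_1$, so the left-hand side equals $\frac12$ for every $\xi\ge 0$, while the right-hand side $(1+\xi)^{-\gamma-1/2}=(1+\xi)^0=1$; hence the two sides are comparable with absolute constants, uniformly in $\xi\ge 0$.

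Now assume $\gamma>-1/2$. On $[0,1]$ the measure $d\Pi_\gamma$ has density $c_\gamma(1-s^2)^{\gamma-1/2}$, where $c_\gamma=\Gamma(\gamma+1)/(\sqrt{\pi}\,\Gamma(\gamma+1/2))>0$, so it is enough to estimate $\int_0^1 e^{-\xi(1-s)}(1-s^2)^{\gamma-1/2}\,ds$ up to constants depending only on $\gamma$. I would split according to the size of $\xi$. For $0\le\xi\le 1$ the exponential factor lies between $e^{-1}$ and $1$, so the integral is comparable to $\int_0^1(1-s^2)^{\gamma-1/2}\,ds$, a finite positive constant (finiteness uses $\gamma-1/2>-1$), while simultaneously $(1+\xi)^{-\gamma-1/2}\simeq 1$; this settles the bounded range.

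For $\xi>1$ I would substitute $r=1-s$ and use $1-s^2=r(2-r)\simeq r$ for $r\in[0,1]$, which turns the integral into something comparable to $\int_0^1 e^{-\xi r}r^{\gamma-1/2}\,dr$; the further substitution $w=\xi r$ gives $\xi^{-\gamma-1/2}\int_0^\xi e^{-w}w^{\gamma-1/2}\,dw$. Since $\gamma-1/2>-1$, the last integral is bounded above by $\Gamma(\gamma+1/2)$ and, for $\xi\ge 1$, bounded below by $\int_0^1 e^{-w}w^{\gamma-1/2}\,dw>0$, so it is $\simeq 1$ in this range; therefore the original integral is $\simeq\xi^{-\gamma-1/2}\simeq(1+\xi)^{-\gamma-1/2}$. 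Combining the two ranges yields the claim. The argument is entirely routine; the only point requiring a little care is the singularity of the density $(1-s^2)^{\gamma-1/2}$ at $s=1$ when $-1/2<\gamma<1/2$, but it remains integrable and the substitution above absorbs it uniformly in $\xi$, so there is no genuine obstacle.
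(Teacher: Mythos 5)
Your proof is correct and follows essentially the same route as the paper: after disposing of the trivial case $\gamma=-1/2$, both arguments replace $d\Pi_\gamma$ by the comparable density $(1-s)^{\gamma-1/2}\,ds$ and substitute $\sigma=\xi(1-s)$ to reduce to the incomplete gamma integral $\int_0^\xi e^{-\sigma}\sigma^{\gamma-1/2}\,d\sigma\simeq\min(\xi^{\gamma+1/2},1)$. Your explicit split into $\xi\le 1$ and $\xi>1$ is just a spelled-out version of that last estimate.
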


\begin{proof}
The case $\gamma = -1/2$  is trivial, so we assume  $\gamma > -1/2$.
Using   the definition of $\Pi_{\gamma}$ and making the transformation $\sigma = \xi (1-s)$, we get
\begin{align*}
	\int_{[0,1]} \exp\big(-\xi (1-s) \big) \, d\Pi_{\gamma}(s) & \simeq
\int_0^1 \exp\big(-\xi (1-s) \big) \,(1-s)^{\gamma-1/2}\,ds  \\ & =
\xi^{-\gamma-1/2} \, \int_0^\xi \exp(-\sigma)\,\sigma^{\gamma-1/2}\,d\sigma \\
& \simeq \xi^{-\gamma-1/2} \, \min\big(\xi^{\gamma+1/2},1\big) \simeq
\frac1{(1+\xi)^{\gamma+1/2}}.
\end{align*}
\end{proof}

\section{{Proof of Theorem \ref{thm:main}}} \label{sec:proof}

In this proof we may assume that $T=1$, since in compact time intervals separated from $0$ the short-time and long-time
bounds coincide, and the long-time bounds \eqref{1.XX} are a simple consequence of the short-time bounds (see \cite[p.\,233]{NoSj}).
In what follows all relations $\simeq$ and $\gtrsim$ are uniform with respect to $\theta,\varphi \in [0,\pi]$ and $0 < t \le 1$.

The proof of Theorem \ref{thm:main} is based on the five parts of Theorem \ref{thm:red_ext},
which are defined in  terms of $\a$ and $\b$.
In \textbf{Part (i) of Theorem \ref{thm:red_ext}} one has $\a,\b \ge -1/2$. This case is  contained in the known part of
Theorem \ref{thm:main} and will not be considered here.

\subsection*{{STEP 1: Part (ii) of Theorem \ref{thm:red_ext}}} \;

\medskip
\noindent \underline{Here  $-1 < \b < -1/2 \le \a$.}
\medskip

By Theorem \ref{thm:red_ext} (ii)
\begin{equation}\label{caseii}
G_t^{\a,\b}(\cos\theta,\cos\varphi) \simeq I_1 + I_2,
\end{equation}
where
\begin{align*}
I_1 & = - (\pi-\theta)(\pi-\varphi) \iint G_{t/4}^{\a+\b+3/2}\Big(u\sin\frac{\theta}2\sin\frac{\varphi}2
	+ v\cos\frac{\theta}2\cos\frac{\varphi}2 ,1\Big) \, d\Pi_{\a}(u)\, \Pi_{\b}(v)\, dv; \\
I_2 & = \iint G_{t/4}^{\a+\b+1/2}\Big(u\sin\frac{\theta}2\sin\frac{\varphi}2 + v\cos\frac{\theta}2\cos\frac{\varphi}2 ,1\Big)
	\, d\Pi_{\a}(u)\, d\Pi_{-1/2}(v).
\end{align*}
Observe that $I_1$ and $I_2$ are positive, in the case of $I_1$ since $G_t^{\a+\b+3/2}(\cdot,1)$ is increasing and
$\Pi_{\b}$ is odd and decreasing. We will estimate $I_1$ and $I_2$ separately.

For the simpler $I_2$ we have (the relations will be justified in a moment)
\begin{align*}
I_2 & \simeq \iint_{[0,1]^2} G_{t/4}^{\a+\b+1/2}\Big(u\sin\frac{\theta}2\sin\frac{\varphi}2
	+ v\cos\frac{\theta}2\cos\frac{\varphi}2 ,1\Big) \, d\Pi_{\a}(u)\, d\Pi_{-1/2}(v) \\
& \simeq t^{-\a-\b-3/2} \int_{[0,1]} \exp\bigg( -\frac{F(u,1)}t\bigg)\, d\Pi_{\a}(u).
\end{align*}
Here we first restrict the set of integration by means of the monotonicity of
$G_{t/4}^{\a+\b+1/2}(\cdot,1)$ (see Lemma \ref{lem:diff}) and the symmetry of the measures involved.
The second relation follows from plugging in the bound of Lemma \ref{lem:jac_low}.
Applying Lemma \ref{lem:exp} with $v=1$, we conclude
\begin{equation*}
I_2\gtrsim t^{-\a-\b-3/2} 
	\exp\bigg(-\frac{ F(1,1)}t\bigg) \int_{[0,1]} \exp\bigg(-c_1\,\frac{(1-u) \theta\varphi}t\bigg)\, d\Pi_{\a}(u),
\end{equation*}
and this inequality can be reversed if we replace $c_1$ by $c_2$. But for any fixed $c>0$, Lemma \ref{ssigma} implies
\begin{equation*}
   \int_{[0,1]} \exp\bigg(-c\,\frac{(1-u) \theta\varphi}t\bigg)\, d\Pi_{\a}(u)
   \simeq \frac1{(1+\theta\varphi/t)^{\a+1/2}} = t^{\a+1/2} (t+\theta\varphi)^{-\a-1/2}.
\end{equation*}
It follows that
\begin{equation} \label{I_2}
  I_2 \simeq (t+\theta\varphi)^{-\a-1/2}\, t^{-\b-1} \exp\bigg( -\frac{(\theta-\varphi)^2}{4t}\bigg).
\end{equation}

To treat $I_1$, we recall that $\Pi_{\b}(v)$ is an odd function which is negative for $v > 0$. Thus we have
\begin{multline*}
I_1 = (\pi-\theta)(\pi-\varphi) \int_{[-1,1]}\int_{[0,1]} \Big[
	G_{t/4}^{\a+\b+3/2}\Big(u\sin\frac{\theta}2\sin\frac{\varphi}2 + v\cos\frac{\theta}2\cos\frac{\varphi}2 ,1\Big) \\
 - G_{t/4}^{\a+\b+3/2}\Big(u\sin\frac{\theta}2\sin\frac{\varphi}2 - v\cos\frac{\theta}2\cos\frac{\varphi}2,1\Big)\Big]
		\, |\Pi_{\b}(v)|\, dv \, d\Pi_{\a}(u).
\end{multline*}
We write the difference here as the integral from $-v$ to $v$ of the derivative and use Lemma~\ref{lem:diff}, to get
\begin{align*}
  I_1  & \simeq
 (\pi-\theta)^2(\pi-\varphi)^2 \\ & \qquad \times \int_{[-1,1]}\int_{[0,1]}\int_{-v}^v
 G_{t/4}^{\a+\b+5/2}\Big(u\sin\frac{\theta}2\sin\frac{\varphi}2 +  v'\cos\frac{\theta}2\cos\frac{\varphi}2 ,1\Big)
 \,dv' \,|\Pi_{\b}(v)|\, dv \, d\Pi_{\a}(u).
\end{align*}
Since the integrand here is increasing in $u$ and in $v'$, the symmetries of the measures  $d\Pi_{\a}$ and $dv'$
show that one can restrict the integration in these two variables to $[0,1]$ and $[0,v]$, respectively.
Then we use Fubini's theorem and integrate first in $v$,
which produces a  factor $\int_{v'}^1|\Pi_{\b}(v)|\, dv \simeq (1-v')^{\b+3/2}$. As a result,
\begin{equation*}
I_1 \simeq
 (\pi-\theta)^2(\pi-\varphi)^2 \int_{[0,1]}\int_{[0,1]}
 G_{t/4}^{\a+\b+5/2}\Big(u\sin\frac{\theta}2\sin\frac{\varphi}2 + v'\cos\frac{\theta}2\cos\frac{\varphi}2 ,1\Big)
\, (1-v')^{\b+3/2}  \,dv'  \, d\Pi_{\a}(u).
\end{equation*}

Now Lemma  \ref{lem:jac_low}
 tells us that
\begin{equation*}
I_1 \simeq
 (\pi-\theta)^2(\pi-\varphi)^2\, t^{-\a-\b-7/2} \int_{[0,1]}\int_{[0,1]} \exp\bigg( -\frac{F(u,v')}t\bigg)
\, (1-v')^{\b+3/2}  \,dv'  \, d\Pi_{\a}(u).
\end{equation*}
From Lemma \ref{lem:exp} we see that this expression can be estimated from above and below by constant times
\begin{align*}
& (\pi-\theta)^2(\pi-\varphi)^2\, t^{-\a-\b-7/2} \exp\bigg(-\frac{F(1,1)}t\bigg) \\
& \;\; \times
  \int_{[0,1]} \exp\bigg(-c\,\frac{(1-u) \theta\varphi}t\bigg) \, d\Pi_{\a}(u)
\int_{[0,1]} \exp\bigg(-c\,\frac{ (1-v')(\pi - \theta)(\pi - \varphi)}t\bigg) \, (1-v')^{\b+3/2} \, dv',
\end{align*}
though with different   values of $c>0$ in the upper and lower estimates. In the last integral here,
we can replace $ (1-v')^{\b+3/2}  \,dv'$ by the comparable measure $d\Pi_{\b+2}(v')$. Then
Lemma \ref{ssigma} implies that the product of the two integrals is of order of magnitude
\begin{equation*}
   t^{\a+1/2} (t+\theta\varphi)^{-\a-1/2}\, t^{\b+5/2}\big[t+(\pi-\theta)(\pi-\varphi)\big]^{-\b-5/2}.
\end{equation*}

Summing up, we have
\begin{equation*} 
I_1 \simeq (t+\theta\varphi)^{-\a-1/2} (\pi-\theta)^2(\pi-\varphi)^2\, \,\big[t+(\pi-\theta)(\pi-\varphi)\big]^{-\b-5/2}
   \, t^{-1/2}  \,  \exp\bigg( -\frac{(\theta-\varphi)^2}{4t}\bigg).
\end{equation*}
Adding this and \eqref{I_2}, we obtain from \eqref{caseii}
\begin{equation} \label{iires}
G_t^{\a,\b}(\cos\theta,\cos\varphi)  \simeq
   (t+\theta\varphi)^{-\a-1/2} \,\big[t+(\pi-\theta)(\pi-\varphi)\big]^{-\b-1/2}
   \, t^{-1/2}  \,  \exp\bigg( -\frac{(\theta-\varphi)^2}{4t}\bigg),
\end{equation}
as desired.

\subsection*{STEP 2: {Part (iii) of Theorem \ref{thm:red_ext}}} \;

\medskip
\noindent \underline{Here $-1 < \a < -1/2 \le \b$.}
\medskip

This part is completely analogous to Part (ii).
We need only swap $\a$ with $\b$,\hskip4pt $u$ with $v$ and $\theta,\;\varphi$ with  $\pi-\theta,\;\pi-\varphi$.
The result will coincide with \eqref{iires}.

\subsection*{STEP 3: {The first three integrals in Parts  (iv) and (v)  of Theorem \ref{thm:red_ext}}}\;

\medskip
\noindent \underline{Here $-1 <\a,\b < -1/2$.}
\medskip

Neglecting irrelevant positive factors, these three integrals in Part (iv) coincide with those in  Part (v), and they are
\begin{align*}
J_1 & = \,\sin\theta\sin\varphi
\, \iint	G_{t/4}^{\a+\b+5/2}\Big(u \sin\frac{\theta}2\sin\frac{\varphi}2 +
	v\cos\frac{\theta}2\cos\frac{\varphi}2,1\Big)
 \, \Pi_{\a}(u)\, du \, \Pi_{\b}(v)\, dv;\\
J_2 & = \,-\sin\frac{\theta}2\sin\frac{\varphi}2
\,\iint	G_{t/4}^{\a+\b+3/2}\Big(u \sin\frac{\theta}2\sin\frac{\varphi}2 +
	v\cos\frac{\theta}2\cos\frac{\varphi}2,1\Big)
	\, \Pi_{\a}(u)\, du \, d\Pi_{-1/2}(v); \\
J_3 & = \,-\cos\frac{\theta}2\cos\frac{\varphi}2
\,\iint	G_{t/4}^{\a+\b+3/2}\Big(u \sin\frac{\theta}2\sin\frac{\varphi}2 +
	v\cos\frac{\theta}2\cos\frac{\varphi}2,1\Big)
	\, d\Pi_{-1/2}(u) \, \Pi_{\b}(v)\, dv.
\end{align*}
Each of these three quantities will be seen to be positive in the estimates that follow.

We start with $J_1$.
Imitating the first step of the treatment of $I_1$ above, now for both the variables $u$ and $v$, we have
\begin{equation} \label{j1}
J_1 = \,\sin\theta\sin\varphi
\, \iint_{[0,1]^2}  \Delta(u,v)
\, |\Pi_{\a}(u)|\, du \, |\Pi_{\b}(v)|\, dv,
\end{equation}
where $\Delta(u,v)$ is defined as
\begin{align*}
 & 	G_{t/4}^{\a+\b+5/2}\Big(u \sin\frac{\theta}2\sin\frac{\varphi}2 +
	v\cos\frac{\theta}2\cos\frac{\varphi}2,1\Big)
-G_{t/4}^{\a+\b+5/2}\Big(-u \sin\frac{\theta}2\sin\frac{\varphi}2 +
	v\cos\frac{\theta}2\cos\frac{\varphi}2,1\Big)   \\
 &  -\,G_{t/4}^{\a+\b+5/2}\Big(u \sin\frac{\theta}2\sin\frac{\varphi}2
	-v\cos\frac{\theta}2\cos\frac{\varphi}2,1\Big)
+G_{t/4}^{\a+\b+5/2}\Big(-u \sin\frac{\theta}2\sin\frac{\varphi}2 -
	v\cos\frac{\theta}2\cos\frac{\varphi}2,1\Big).
\end{align*}
This second difference is also given by
\begin{equation}\label{Delta}
 \Delta(u,v) =	\iint_{|u'|<u,\,|v'|<v}
\partial_{u'}\partial_{v'}	\Big[G_{t/4}^{\a+\b+5/2}\Big(u' \sin\frac{\theta}2\sin\frac{\varphi}2 +
	v'\cos\frac{\theta}2\cos\frac{\varphi}2,1\Big)\Big] \,du'\,dv'.
\end{equation}
Applying Lemma \ref{lem:diff} twice, we have
\begin{align*}
&\partial_{u'}\partial_{v'} \Big[	G_{t/4}^{\a+\b+5/2}\Big(u' \sin\frac{\theta}2\sin\frac{\varphi}2 +
	v'\cos\frac{\theta}2\cos\frac{\varphi}2,1\Big)\Big]\\
&\quad \simeq \sin\theta\sin\varphi\: G_{t/4}^{\a+\b+9/2}
\Big(u' \sin\frac{\theta}2\sin\frac{\varphi}2 +
	v'\cos\frac{\theta}2\cos\frac{\varphi}2,1\Big).
\end{align*}
By combining this with \eqref{j1} and \eqref{Delta}, we obtain
\begin{multline*}
J_1  \simeq  \,\sin^2\theta\,\sin^2\varphi \iint_{[0,1]^2}
\iint_{|u'|<u,\,|v'|<v}  \\ G_{t/4}^{\a+\b+9/2}\Big(u' \sin\frac{\theta}2\sin\frac{\varphi}2 +
	v'\cos\frac{\theta}2\cos\frac{\varphi}2,1\Big)\,  du'\,dv'
  \, |\Pi_{\a}(u)|\, du \, |\Pi_{\b}(v)|\, dv.
\end{multline*}

As in the case of $I_1$ in Step 1, we can restrict the inner integration here to $0<u'<u$ and $0<v'<v$,
then apply Lemma \ref{lem:jac_low}, and finally swap the order of integration. The result will be
\begin{equation*}
 J_1 \simeq \sin^2\theta\,\sin^2\varphi\;t^{-\a-\b-11/2}\iint_{[0,1]^2}
 \exp\bigg(-\frac{F(u',v')}{t}\bigg)\, \int_{u'}^{1}|\Pi_{\a}(u)|\, du\, \int_{v'}^{1}|\Pi_{\b}(v)|\, dv\, du'\, dv'.
\end{equation*}
From Lemma \ref{lem:mes} we see that the two inner integrals here are of order of magnitude
$(1-u')^{\a+3/2}$ and $(1-v')^{\b+3/2}$, respectively. The same lemma allows us to replace
$(1-u')^{\a+3/2}\,du'$ and  $(1-v')^{\b+3/2}\,dv'$ by $d\Pi_{\a+2}(u')$ and  $d\Pi_{\b+2}(v')$.
Then Lemma \ref{lem:exp} implies that $J_1$ is bounded above and below by expressions of type
\begin{align*}
 &\sin^2\theta \,\sin^2\varphi\;t^{-\a-\b-11/2} \exp\bigg(-\frac{F(1,1)}{t}\bigg)\, \\
&\quad \times \int_{[0,1]}
\,\exp\bigg(-c\,\frac{(1-u') \theta\varphi}t\bigg)\, d\Pi_{\a+2}(u') \;  \int_{[0,1]}
\exp\bigg(-c\,\frac{ (1-v')(\pi - \theta)(\pi - \varphi)}t\bigg)\, d\Pi_{\b+2}(v'),
\end{align*}
with constants $c>0$. Because of Lemma \ref{ssigma},  the integrals in this expression
are of orders of magnitude $(1+\theta\varphi/t)^{-\a-5/2}$ and
$[1+(\pi - \theta)(\pi - \varphi)/t]^{-\b-5/2}$, respectively. Thus
\begin{align}\label{J1res}
J_1 & \simeq \theta^2\varphi^2\,(t+\theta\varphi)^{-\a-5/2}\,(\pi - \theta)^2(\pi - \varphi)^2\,
  \big[t+(\pi - \theta)(\pi - \varphi)\big]^{-\b-5/2} \\
& \qquad \times t^{-1/2}\,\exp\bigg(-\frac{(\theta-\varphi)^2}{4t}\bigg).  \notag
\end{align}

Next we consider $J_2$. As in the argument for $I_1$ above but with $u$ and $v$ swapped, we write
\begin{align*}
J_2 &  =\,
   \sin\frac{\theta}2\sin\frac{\varphi}2
\,\int_{v\in [-1,1]}\int_{u\in[0,1]}	\Big[G_{t/4}^{\a+\b+3/2}\Big(u \sin\frac{\theta}2\sin\frac{\varphi}2 +
	v\cos\frac{\theta}2\cos\frac{\varphi}2,1\Big)  \\
&  \qquad  \qquad \qquad - G_{t/4}^{\a+\b+3/2}\Big(-u \sin\frac{\theta}2\sin\frac{\varphi}2 +
	v\cos\frac{\theta}2\cos\frac{\varphi}2,1\Big)\Big]
	\, |\Pi_{\a}(u)|\, du \, d\Pi_{-1/2}(v)\\
& \simeq \, \sin^2\frac{\theta}2\,\sin^2\frac{\varphi}2
\,\int_{v\in [-1,1]}\int_{u\in[0,1]} \int_{|u'|<u} \\
&  \qquad  \qquad \qquad  G_{t/4}^{\a+\b+5/2}\Big(u' \sin\frac{\theta}2\sin\frac{\varphi}2 +
	v\cos\frac{\theta}2\cos\frac{\varphi}2,1\Big)\,du'\, |\Pi_{\a}(u)|\, du \, d\Pi_{-1/2}(v),
\end{align*}
where we also wrote the difference as an integral of the derivative and used Lemma \ref{lem:diff}.
Since $G_{t/4}^{\a+\b+5/2}(\cdot,1)$ is increasing and the measures are symmetric,
the integrations in $u'$ and $v$ can be restricted to $0<u'<u$ and $v=1$, respectively.
As in the case of $J_1$ above, we switch the order of integration in
$u'$ and $u$, and then replace $\int_{u'}^{1} |\Pi_{\a}(u)|\, du\,du'$ by $d\Pi_{\a+2}(u')$.
The result will be
\begin{equation*}
  J_2  \simeq \,
  \theta^2\,\varphi^2
  \,\int_{[0,1]} G_{t/4}^{\a+\b+5/2}\Big(u' \sin\frac{\theta}2\sin\frac{\varphi}2 +
	\cos\frac{\theta}2\cos\frac{\varphi}2,1\Big)\,d\Pi_{\a+2}(u').
\end{equation*}
Apply now first Lemma \ref{lem:jac_low} and then Lemma \ref{lem:exp}, to conclude that
\begin{equation*}
J_2  \simeq \,
  \theta^2\,\varphi^2\; t^{-\a-\b-7/2} \, \exp\bigg(-\frac{F(1,1)}{t}\bigg)\,
  \int_{[0,1]} \,\exp\bigg(-c\,\frac{(1-u') \theta\varphi}t\bigg) \, d\Pi_{\a+2}(u').
\end{equation*}
Using finally Lemma \ref{ssigma}, we get
\begin{equation}  \label{J2res}
J_2  \simeq \, \theta^2\,\varphi^2 \; (t+\theta\varphi)^{-\a-5/2}\, 
	t^{-\b-1} \, \exp\bigg(-\frac{(\theta-\varphi)^2}{4t}\bigg). 
\end{equation}

The integral $J_3$ is obtained from $J_2$ by the same swapping that led from Part (ii) to Part (iii) above. It follows that
\begin{equation}  \label{J3res}
  J_3  \simeq \,(\pi - \theta)^2(\pi - \varphi)^2\, t^{-\a-1} \,
  \big[t+(\pi - \theta)(\pi - \varphi)\big]^{-\b-5/2}\, \exp\bigg(-\frac{(\theta-\varphi)^2}{4t}\bigg).
\end{equation}

\subsection*{STEP 4: {The fourth integral in Part  (iv) of Theorem \ref{thm:red_ext}}} \;

\medskip
\noindent \underline{Here $-1 <\a,\b < -1/2$ and $\a+\b > -3/2$.}
\medskip

This integral is
\begin{align*}
 & \iint G_{t/4}^{\a+\b+1/2}\Big(u \sin\frac{\theta}2\sin\frac{\varphi}2 +
	v \cos\frac{\theta}2\cos\frac{\varphi}2,1\Big)\,d\Pi_{-1/2}(u)\,d\Pi_{-1/2}(v)\\
 &\quad \simeq \, G_{t/4}^{\a+\b+1/2}\Big( \sin\frac{\theta}2\sin\frac{\varphi}2 +
	 \cos\frac{\theta}2\cos\frac{\varphi}2,1\Big),
\end{align*}
since $G_{t/4}^{\a+\b+1/2}(\cdot,1)$ is increasing. Lemma \ref{lem:jac_low} tells us that this is of order of magnitude
\begin{equation} \label{J4res}
  t^{-\a-\b-3/2}\, \exp\bigg(-\frac{(\theta-\varphi)^2}{4t}\bigg).
\end{equation}

We can now finish the argument for Part (iv) of  Theorem \ref{thm:red_ext}.
The quantities in \eqref{J4res} and those in \eqref{J1res}, \eqref{J2res} and \eqref{J3res} are all dominated by
$Z_t^{\a,\b}(\theta,\varphi)$.
Moreover, depending on the relative sizes of  $\theta\varphi$, $(\pi - \theta)(\pi - \varphi)$ and $t$,
one of these quantities is in each case of the same order of magnitude as $Z_t^{\a,\b}(\theta,\varphi)$.  

\subsection*{STEP 5: {The fourth integral in Part (v) of Theorem \ref{thm:red_ext}}} \;

\medskip
\noindent \underline{Here $-1 <\a,\b < -1/2$ and $\a+\b \le -3/2$.}
\medskip

The integral is
\begin{equation} \label{term4}
\iint  H_{t/4}^{\lambda}\Big(u \sin\frac{\theta}2\sin\frac{\varphi}2 + v\cos\frac{\theta}2\cos\frac{\varphi}2 \Big)
	\, d\Pi_{-1/2}(u)\, d\Pi_{-1/2}(v),
\end{equation}
where $\lambda :=\a+\b+1/2 \in (-3/2,-1]$.
We can now use Lemma \ref{Htl}, together with the facts that $H_{t}^{\lambda}$ is even in $[-1,1]$
and increasing in $[0,1]$, to estimate the integral in \eqref{term4}. This integral equals one-half of
\begin{equation*}
  H_{t/4}^{\lambda}\bigg(\cos \frac{\theta -\varphi}2\bigg) +  H_{t/4}^{\lambda}\bigg(\cos \frac{\theta +\varphi}2\bigg) \simeq
  H_{t/4}^{\lambda}\bigg(\cos \frac{\theta -\varphi}2\bigg) \simeq t^{-\lambda -1}\, \exp\bigg( -\frac{(\theta-\varphi)^2}{4t} \bigg).
\end{equation*}

By means of these estimates together with those in \eqref{J1res}, \eqref{J2res} and \eqref{J3res},
we can now finish the proof for Part (v) of Theorem  \ref{thm:red_ext}, in the same way as we ended that of
Part (iv) in Step 4.

Theorem \ref{thm:main} is completely proved.
\qed


\end{document}